\newtheorem{thm}{Theorem}
\newtheorem{cor}[thm]{Corollary}
\newtheorem{defi}[thm]{Definition}
\newtheorem{nota}[thm]{Notation}
\newtheorem{princ}[thm]{Principle}
\newcommand\be{\begin{equation}}
\newcommand\ee{\end{equation}} 
\newcommand{\usftext}[1]{\textsf{\upshape #1}}
\newcommand{\QFAC}{\ensuremath{{\usftext{QF-AC}}}} 
\newcommand{\eefa}{\ensuremath{{\usftext{E-EFA}}}} 
\newcommand{\ACA}{\ensuremath{\usftext{ACA}}} %
\newcommand{\RCA}{\ensuremath{\usftext{RCA}}} %
\newcommand{\SCF}{\ensuremath{\usftext{SCF}}} %
\newcommand{\ZFC}{\ensuremath{\usftext{ZFC}}}
\newcommand{\IST}{\ensuremath{\usftext{IST}}}
\newcommand{\WKL}{\ensuremath{\usftext{WKL}}}
\def\R{{\mathbb{R}}}
\def\({\textup{(}}
\def\){\textup{)}}
\def\st{\textup{st}}
\def\asa{\leftrightarrow}
\def\di{\rightarrow}
\def\LUB{\textup{\textsf{LUB}}}
\def\ns{\textup{\textsf{ns}}}
\def\HBU{\textup{\textsf{HBU}}}
\newbox\gnBoxA
\newdimen\gnCornerHgt
\newdimen\gnArgHgt
\def\bdefi{\begin{defi}\rm}
\def\edefi{\end{defi}}
\def\bnota{\begin{nota}\rm}
\def\enota{\end{nota}}
\def\FIVE{\Pi_{1}^{1}\text{-\textup{\textsf{CA}}}_{0}}
\def\ATR{\textup{\textsf{ATR}}}
\def\ZFC{\textup{\textsf{ZFC}}}
\def\IST{\textup{\textsf{IST}}}
\def\STP{\textup{\textsf{STP}}}
\def\ns{\textup{\textsf{ns}}}
\def\RCA{\textup{\textsf{RCA}}}
\def\({\textup{(}}
\def\){\textup{)}}
\def\RCAo{\textup{\textsf{RCA}}_{0}^{\omega}}
\def\WKL{\textup{\textsf{WKL}}}
\def\WWKL{\textup{\textsf{WWKL}}}
\def\bye{\end{document}}
\def\N{{\mathbb  N}}
\def\Q{{\mathbb  Q}}
\def\R{{\mathbb  R}}
\def\B{{\textsf{\textup{B}}}}
\def\st{\textup{st}}
\def\di{\rightarrow}
\def\asa{\leftrightarrow}
\def\ACA{\textup{\textsf{ACA}}}
\def\QFAC{\textup{\textsf{QF-AC}}}
\def\HBU{\textup{\textsf{HBU}}}
\def\HBU{\textup{\textsf{HBU}}}
\def\FF{\textup{\textsf{FF}}}
\def\LMP{\textup{\textsf{LMP}}}
\def\SCF{\textup{\textsf{SCF}}}
\def\MLR{\textup{\textsf{MLR}}}
\def\ml{\textup{\textsf{ml}}}
\def\WCF{\textup{\textsf{WCF}}}
\def\HAC{\textup{\textsf{HAC}}}
\def\INT{\textup{\textsf{int}}}
\newcommand{\HACint}{\ensuremath{\usftext{HAC}_\intern}}
\newcommand{\PFTPA}{\ensuremath{\usftext{PF-TP}_{\forall}}}
\newcommand{\forallst}{\forall^{\st{}}}
\newcommand{\existsst}{\exists^{\st{}}}
\def\FIVE{\Pi_{1}^{1}\text{-\textsf{CA}}_{0}}
\def\intern{\textup{\textsf{int}}}
\numberwithin{equation}{section}
\numberwithin{thm}{section}
\begin{document}
\title{Some nonstandard equivalences in Reverse Mathematics}

\author{Sam Sanders\thanks{This research was supported by the Alexander von Humboldt Foundation and LMU Munich (via the Excellence Initiative).}}
\institute{Center for Advanced Studies \&\\ Munich Center for Mathematical Philosophy, \\LMU Munich, Germany
\email{sasander@me.com}}

%
\maketitle

\begin{abstract}
Reverse Mathematics (RM) is a program in the foundations of mathematics founded by Friedman and developed extensively by Simpson.  The aim of RM is finding the minimal axioms needed to prove a theorem of ordinary (i.e.\ non-set theoretical) mathematics.  
In the majority of cases, one also obtains an \emph{equivalence} between the theorem and its minimal axioms.  This equivalence is established in a weak logical system called \emph{the base theory}; four prominent axioms which boast lots of such equivalences are dubbed 
\emph{mathematically natural} by Simpson.  In this paper, we show that a number of axioms from Nonstandard Analysis are equivalent to theorems of ordinary mathematics \emph{not involving Nonstandard Analysis}.   
These equivalences are proved in a \emph{weak} base theory recently introduced by van den Berg and the author.  
In particular, our base theories have the first-order strength of \emph{elementary function arithmetic}, in contrast to the original version of this paper \cite{samcie}.  
Our results combined with Simpson's criterion for naturalness suggest the controversial point that Nonstandard Analysis is actually mathematically natural.      
\end{abstract}


\section{Introduction}\label{intro}
Reverse Mathematics (RM) is a program in the foundations of mathematics founded by Friedman (\cite{fried}) and developed extensively by Simpson (\cite{simpson2}) and others.  We refer to the latter for an overview of RM and will assume basic familiarity, in particular with the \emph{Big Five} systems of RM.  The latter are (still) claimed to capture the majority of theorems of ordinary (i.e.\ non-set theoretical) mathematics (\cite{montahue}*{p.\ 495}).  Our starting point is the following quote by Simpson on the `mathematical naturalness' of logical systems from \cite{simpson2}*{I.12}:
\begin{quote}
From the above it is clear that the [Big Five] five basic systems $\RCA_{0}$, $\WKL_{0}$, $\ACA_{0}$, $\ATR_{0}$, $\FIVE$ arise naturally from investigations of the Main Question. The proof that these systems are mathematically natural is provided by Reverse Mathematics.
\end{quote}
In a nutshell, according to Simpson, the many equivalences in RM, proved over $\RCA_{0}$ and involving the other four Big Five, imply that the Big Five systems are mathematically natural.   
In this paper, we show that a number of axioms from \emph{Nonstandard Analysis} (NSA) are equivalent to theorems of ordinary mathematics \emph{not involving NSA}.   
These results combined with Simpson's criterion for naturalness suggest the controversial point that NSA is actually mathematically natural.  
Indeed, both Alain Connes and Errett Bishop have expressed extremely negative (but unfounded; see \cite{samsynt}) opinions of NSA, in particular its naturalness. 

\smallskip

Finally, the aforementioned equivalences are proved in a (weak) base theory recently introduced by van den Berg and the author in \cite{bennosam}.  
We sketch the main properties of this base theory in Section \ref{prelim} and prove our main results in Section~\ref{mikeh}.    
The latter include an equivalence between the \emph{Heine-Borel compactness} (for \textbf{any} open cover) of the unit interval and the \emph{nonstandard compactness} of Cantor space.  
We obtain similar results based on $\WWKL$, a weakening of $\WKL$.  The main improvement over \cite{samcie} is that the base theories in this paper are $\Pi_{2}^{0}$-conservative over \emph{elementary function arithmetic} (See \cite{simpson2}*{II.8} for the latter).  

\section{A base theory from Nonstandard Analysis}\label{prelim}
We introduce the system $\B_{0}$ from \cite{bennosam}.
This system is a $\Pi_{2}^{0}$-conservative extension of \textsf{EFA} (aka $I\Delta_{0}+\textsf{EXP}$) enriched with all finite types 
and fragments of Nelson's axioms of \emph{internal set theory} (\cite{wownelly}), a well-known axiomatic approach to NSA.  

\smallskip

Let $\eefa^{\omega}$ be $\textsf{EFA}$ enriched with all finite types, i.e.\ Kohlenbach's system \textsf{E-G$_{3}$A$^{\omega}$} (\cite{kohlenbach3}*{p.\ 55}).
The language of $\B_{0}$ is obtained from that of $\eefa^{\omega}$ by adding unary predicates `$\st^\sigma$' for any finite type $\sigma$. 
Formulas in the old language of $\eefa^{\omega}$, i.e.\ those not containing these new symbols, are \emph{internal}; By contrast, general formulas of $\B_{0}$ are \emph{external}. 
The new `st' predicates give rise to two new quantifiers as in \eqref{quants}, and we omit type superscripts whenever possible.
\be\label{quants}
(\forallst x) \Phi(x)  \equiv  ( \forall x ) (\, \st(x)\rightarrow\Phi(x)  )\textup{ and }( \existsst x) \Phi(x)  \equiv  (\exists x ) (\, \st(x)\wedge\Phi(x) ).
\ee
The system $\B_{0}$ is $\eefa^{\omega}+\QFAC^{1,0}$, plus the basic axioms as in Definition \ref{defke}, and fragments of Nelson's axioms of internal set theory $\IST$, namely \emph{Idealisation}~\textsf{I}, \emph{Standardisation} $\HACint$, and \emph{Transfer} $\PFTPA$, defined as follows. 
\bdefi[$\QFAC$]
For all finite types $\sigma, \tau$ and quantifier-free $A$:
\be\tag{$\QFAC^{\sigma,\tau}$}
(\forall x^{\sigma})(\exists y^{\tau})A(x, y)\di (\exists Y^{\sigma\di \tau})(\forall x^{\sigma})A(x, Y(x))
\ee
\edefi
\begin{defi}\label{defke}~
\begin{enumerate}
\item The axioms $\st(x) \land x = y \to \st(y)$ and $\st(f)\wedge\st(x)\rightarrow\st(fx)$.
\item The axiom $\st(t)$ for each term $t$ in the language of $\B_{0}$.
\item The axiom $\st_{0}(x) \land y \leq_{0} x \to \st_{0}(y)$.
\end{enumerate}
\end{defi}
For the next definition, we note that $x$ in \eqref{horkilua} and $F(x)$ in \eqref{HACINTJE} are both a \emph{finite sequence} of objects of type $\sigma$, as discussed in Notation \ref{klaf}.  

\bdefi[Fragments of $\IST$]
\begin{enumerate}
\item$\HAC_{\INT}$: For any internal formula $\varphi$, we have
\be\label{HACINTJE}
(\forall^{\st}x^{\rho})(\exists^{\st}y^{\sigma})\varphi(x, y)\di \big(\exists^{\st}F^{\rho\di \sigma}\big)(\forall^{\st}x^{\rho})(\exists y^{\sigma}\in F(x))\varphi(x,y).
\ee
\item $\textsf{I}$: For any internal formula $\varphi$, we have
\be\label{horkilua}
(\forall^{\st} x^{\sigma})(\exists y^{\tau} )(\forall z^{\sigma}\in x)\varphi(z,y)\di (\exists y^{\tau})(\forall^{\st} z^{\sigma})\varphi(z,y).
\ee
\item $\PFTPA:$ For any internal $\varphi$ with all parameters shown, we have $ \forallst  x \,  \varphi( x) \to \forall  x \, \varphi( x)$, i.e.\ $x$ is the \textbf{only} free variable in $\varphi(x)$.
\end{enumerate}
\edefi
\begin{nota}[Finite sequences]\label{klaf}\rm
There are at least two ways of approaching `finite sequences of objects of type $\sigma$' in $\eefa^{\omega}$:  
First of all, as in \cite{brie}, we could extend $\eefa^{\omega}$ with types $\sigma^*$ for finite sequences of objects of type $\sigma$, add constants for the empty sequence and the operation of prepending an element to a sequence, as well as a list recursor satisfying the expected equations.  

\smallskip

Secondly, as in \cite{bennosam}, we could exploit the fact that one can code finite sequences of objects of type $\sigma$ as a single object of type $\sigma$ in such a way that every object of type $\sigma$ codes a sequence. Moreover, the operations on sequences, such as extracting their length or concatenating them, are given by terms in G\"odel's $T$. 

\smallskip

We choose the second option here and will often use set-theoretic notation as follows: `$\emptyset$' is (the code of) the empty sequence,` $\cup$' stands for concatenation, and `$\{ x \}$' for the finite sequence of length $1$
with sole component $x$. For $x$ and $y$ of the same type we will write $x \in y$ if $x$ is equal to one of the components of the sequence coded by $y$.  Furthermore, for $\alpha^{0\di \rho}$ and $k^{0}$, the finite sequence $\overline{\alpha}k$ is exactly $\langle \alpha(0), \alpha(1), \dots, \alpha(k-1) \rangle$.  
Finally, if $Y$ is of type $\sigma \to \tau$ and $x$ is of type $\sigma$ we define $Y[x]$ of type $\tau$ as $Y[x] := \cup_{ f \in Y} f(x)$.
\end{nota}
With this notation in place, we can now formulate a crucial theorem from \cite{bennosam}*{\S3}.
\begin{thm}\label{consresult}
For $\varphi$ internal and $\Delta_{\intern}$ a collection of internal formulas, if the system $\B_{0}+\Delta_{\intern}$ proves $ (\forall^{\st}x)(\exists^{\st}y)\varphi(x,y)$, then 
\be\label{keaker}
\eefa^{\omega}+\QFAC^{1,0}+\Delta_{\intern}\vdash (\exists \Phi)(\forall x)(\exists y\in \Phi(x))\varphi(x,y)
 \ee
\end{thm}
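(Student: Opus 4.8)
The plan is to obtain Theorem~\ref{consresult} as a \emph{term-extraction} corollary of a Dialectica-style \emph{functional interpretation} tailored to Nonstandard Analysis, exactly as carried out in \cite{bennosam}*{\S3}. This interpretation $\Phi\mapsto\Phi^{S_{\st}}$ sends each external formula $\Phi(\tup a)$ to one of the form $(\existsst\tup x)(\forallst\tup y)\,\varphi_{S_{\st}}(\tup x,\tup y,\tup a)$ with matrix $\varphi_{S_{\st}}$ internal, and it is \emph{Herbrandised}: external existential quantifiers are witnessed by \emph{finite sequences} of candidates rather than by single objects, which is why the sequence coding of Notation~\ref{klaf} is built into $\B_{0}$ and why no genuine recursion --- hence nothing beyond $\eefa^{\omega}$ --- is needed to produce the witnessing terms. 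Two features of $S_{\st}$ drive everything: (i) internal formulas are their own interpretation, so $\Delta_{\intern}$ is carried along untouched; (ii) a formula $(\forallst u)(\existsst v)\,\psi(u,v)$ with $\psi$ internal has $S_{\st}$-interpretation equal, after the evident simplification, to $(\existsst F)(\forallst u)(\exists v\in F(u))\,\psi(u,v)$.

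The core is a \emph{soundness theorem}: if $\B_{0}+\Delta_{\intern}\vdash\Phi(\tup a)$, then from the derivation one reads off \emph{closed} terms $\tup t$ of $\eefa^{\omega}$ with
\[
\eefa^{\omega}+\QFAC^{1,0}+\Delta_{\intern}\vdash(\forall\tup a)(\forall\tup y)\,\varphi_{S_{\st}}(\tup t\,\tup a,\tup y,\tup a).
\]
One proves this by induction on the length of the $\B_{0}$-derivation. The propositional and internal-quantifier rules are treated as in the bounded (Diller--Nahm) functional interpretation; in particular contraction is handled by concatenating the relevant finite-sequence witnesses, so no decidability of the matrix, and hence no case distinction or iteration, is ever invoked --- this is what keeps the first-order strength at the level of $\efa$ rather than $\PA$, the improvement over \cite{samcie}. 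For the non-logical part one checks, one axiom at a time, that the basic axioms of Definition~\ref{defke}, $\QFAC^{1,0}$, Idealisation $\textsf{I}$, Standardisation $\HACint$ and Transfer $\PFTPA$ each receive an $S_{\st}$-interpretation witnessed by explicit closed terms over $\eefa^{\omega}+\QFAC^{1,0}+\Delta_{\intern}$.

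To conclude, apply soundness to $\Phi\equiv(\forallst x)(\existsst y)\,\varphi(x,y)$. By feature (ii), $\Phi^{S_{\st}}$ is $(\existsst F)(\forallst x)(\exists y\in F(x))\,\varphi(x,y)$, so its st-free matrix $\varphi_{S_{\st}}(F,x)$ is $(\exists y\in F(x))\,\varphi(x,y)$ with $x$ the only counterexample variable. Soundness then delivers a closed term $t$ of $\eefa^{\omega}$ with $\eefa^{\omega}+\QFAC^{1,0}+\Delta_{\intern}\vdash(\forall x)(\exists y\in t(x))\,\varphi(x,y)$, and putting $\Phi:=t$ --- a closed term, hence a legitimate witness --- gives exactly \eqref{keaker}.

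I expect the main obstacle to be the uniform handling of the $\IST$-fragments under $S_{\st}$, with Transfer the genuinely delicate case. What makes $\PFTPA$ manageable is feature (i) together with the observation that a parameter-free internal $(\forallst x)\varphi(x)$, once \emph{derived} in $\B_{0}$, has $S_{\st}$-interpretation with empty existential block and matrix $\varphi(x)$, so the inductive hypothesis already yields $\eefa^{\omega}+\QFAC^{1,0}+\Delta_{\intern}\vdash(\forall x)\varphi(x)$ and the Transfer step to $(\forall x)\varphi(x)$ then costs nothing; it is precisely here that the restriction to a \emph{single} free variable is essential, and indeed the theorem fails outright for Transfer with parameters. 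Secondary but necessary work is verifying that the interpretation of Idealisation constructs its finite-sequence witness from the bounded block $(\forall z\in x)\varphi(z,y)$ using only the sequence coding, and the bookkeeping to guarantee that every extracted term stays inside the $\eefa^{\omega}$-fragment of G\"odel's $T$ --- which is exactly what pins the first-order strength of the whole package to $\efa$.
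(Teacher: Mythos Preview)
The paper does not give its own proof of Theorem~\ref{consresult}; it cites the result from \cite{bennosam}*{\S3}. Your overall strategy---the Herbrandised $S_{\st}$ functional interpretation with soundness by induction on derivations---is precisely the machinery of that reference, so at that level you are aligned with the intended argument.

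There is, however, a genuine gap in your treatment of $\PFTPA$, and it is exactly the point the paper emphasises when contrasting Theorem~\ref{consresult} with Theorem~\ref{consresultcor}. You treat Transfer as a \emph{rule}, speaking of $(\forallst x)\varphi(x)$ ``once \emph{derived} in $\B_{0}$'' and then invoking the inductive hypothesis on that sub-derivation. But $\PFTPA$ is an \emph{axiom schema}, so in the soundness induction it is a base case: one must realise its $S_{\st}$-interpretation outright. That interpretation unwinds to $(\existsst V)\big[(\forall x\in V)\varphi(x)\to(\forall x)\varphi(x)\big]$, which $\eefa^{\omega}$ proves classically by excluded middle on the closed sentence $(\forall x)\varphi(x)$, but for which there is in general \emph{no} closed term witnessing $V$: take $\varphi(x)$ expressing that $x$ does not code a proof of $0=1$; for any fixed finite $V$ the hypothesis is verifiable while the conclusion is a consistency statement. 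Consequently soundness over $\B_{0}$ only delivers the \emph{existence} of realisers---which is why \eqref{keaker} reads $(\exists\Phi)(\ldots)$ rather than exhibiting a term. Your claim that ``soundness then delivers a closed term $t$ of $\eefa^{\omega}$'' proves too much: it would make Theorem~\ref{consresult} coincide with Theorem~\ref{consresultcor}, contradicting the paper's explicit remark that $\PFTPA$ is precisely what forfeits term extraction.
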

By the results in Section \ref{mikeh} and \cite{bennosam}, $\PFTPA$ is useful for obtaining equivalences as in RM.  
However, this `usefulness' comes at a price, as $\B_{0}^{-}$ (i.e.\ $\B_{0}\setminus \PFTPA$) satisfies the following, 
where a \emph{term of G\"odel's system $T$ is obtained}, to be compared to the \emph{existence} of a functional in \eqref{keaker}.
Hence, $\PFTPA$ seems usuitable for proof mining, as the latter deals with extracted \emph{terms} 
\begin{thm}[Term extraction]\label{consresultcor}
If $\Delta_{\textsf{\textup{int}}}$ is a collection of internal formulas and $\psi$ is internal, and $\B_{0}^{-} + \Delta_{\textsf{\textup{int}}} \vdash (\forall^{\st}{x})(\exists^{\st}{y})\psi({x},{y})$, 
then one can extract from the proof a term $t$ from G\"odel's $T$ such that
\be\label{effewachten}
\textup{\eefa$^{\omega}$}+\QFAC^{1,0}+ \Delta_{\textsf{\textup{int}}} \vdash (\forall {x})(\exists {y}\in t({x}))\psi({x},{y}).
\ee
\end{thm}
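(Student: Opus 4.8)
The plan is to apply the same functional interpretation that underlies Theorem~\ref{consresult} --- the Herbrandized (nonstandard) Dialectica interpretation of \cite{bennosam}*{\S3} --- and to keep track of the realizing objects that its soundness proof produces. Since $\B_{0}^{-}$ is based on classical logic one first precomposes this interpretation with a Shoenfield-style negative translation, obtaining the combined interpretation $S_{\st}$ that assigns to each formula $\Phi$ an internal formula of the shape $(\exists^{\st}\underline{a})(\forall^{\st}\underline{b})\,\Phi_{S_{\st}}(\underline{a},\underline{b})$, with $\underline{a},\underline{b}$ tuples of (possibly higher-type) variables. Two features of this interpretation are crucial. First, if $\Phi$ is internal then $\Phi_{S_{\st}}\equiv\Phi$ with both tuples empty, so the members of $\Delta_{\intern}$ and the matrix $\psi$ are left completely untouched. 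Secondly, the clause for the quantifier $\forall^{\st}$ is \emph{Herbrandized}: witnesses are \emph{finite sequences}, and the interpretation introduces bounded existential quantifiers $\exists\, y\in(\cdot)$, which is exactly the origin of the `$\exists\, y\in t(x)$' occurring in \eqref{effewachten}.

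Next I would invoke the soundness theorem for $S_{\st}$ from \cite{bennosam}*{\S3}: each axiom of $\B_{0}^{-}$ --- the axioms of $\eefa^{\omega}$, the schema $\QFAC^{1,0}$, the basic axioms of Definition~\ref{defke}, and the fragments $\textsf{I}$ and $\HACint$ of $\IST$ --- has an $S_{\st}$-interpretation witnessed by a tuple of \emph{terms of G\"odel's $T$}, and this assignment is compositional along the rules of the (negatively translated) calculus; hence from a derivation of $(\forall^{\st}x)(\exists^{\st}y)\psi(x,y)$ in $\B_{0}^{-}+\Delta_{\intern}$ one reads off such terms for that very formula. The point distinguishing this from Theorem~\ref{consresult} is precisely the absence of $\PFTPA$: the $S_{\st}$-interpretation of Transfer, $\forall^{\st}x\,\varphi(x)\to\forall x\,\varphi(x)$ for internal $\varphi$, requires producing a standard $x$ with $\varphi(x)\to\forall x'\,\varphi(x')$, which exists by a classical counterexample argument but is \emph{not} given by a term of G\"odel's $T$ --- this is exactly why Theorem~\ref{consresult} can only assert `$(\exists\Phi)\dots$'. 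With Transfer removed, every remaining axiom --- $\QFAC^{1,0}$ as in the standard Dialectica treatment of choice, and $\textsf{I}$, $\HACint$ as in \cite{bennosam} --- is term-interpreted, so the whole derivation is.

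It remains to compute the $S_{\st}$-interpretation of the conclusion. As $\psi$ is internal, the subformula $(\exists^{\st}y)\psi(x,y)$ is interpreted with a finite-sequence witness and an internal matrix; applying the Herbrandized clause for $\forall^{\st}x$ and collapsing the nested bounded existential quantifiers then gives the interpretation $(\exists^{\st}F)(\forall^{\st}x)(\exists\, y\in F(x))\,\psi(x,y)$. By soundness one extracts a term $t$ of G\"odel's $T$ witnessing the outermost $\exists^{\st}$, so that $\eefa^{\omega}+\QFAC^{1,0}+\Delta_{\intern}\vdash(\forall^{\st}x)(\exists\, y\in t(x))\,\psi(x,y)$; since the target theory is internal --- it contains no `$\st$'-predicate at all --- the external quantifier $\forall^{\st}x$ is just the ordinary $\forall x$, which is \eqref{effewachten}.

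The main obstacle, and really the only nontrivial point, is the verification in the second step that none of the surviving axioms --- in particular the idealisation schema $\textsf{I}$ and the standardisation schema $\HACint$ --- secretly needs a non-term ingredient in the manner of Transfer, and that the soundness proof still goes through over the weak base $\eefa^{\omega}$ rather than $\epa$. All of this is, however, already contained in the soundness proof of the interpretation in \cite{bennosam}*{\S3}; the present theorem is obtained by restricting that proof to derivations that do not use $\PFTPA$ and observing that the objects it extracts are then genuine terms of G\"odel's $T$.
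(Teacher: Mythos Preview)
The paper does not actually prove this theorem: it is stated as a result imported from \cite{bennosam}*{\S3}, with no argument given beyond the surrounding remark that dropping $\PFTPA$ from $\B_{0}$ is precisely what allows one to extract a \emph{term} of G\"odel's $T$ rather than merely asserting the existence of a functional as in Theorem~\ref{consresult}. Your proposal is therefore not in conflict with the paper's proof---there is none to compare with---and in fact it correctly identifies both the source (\cite{bennosam}) and the mechanism (the Herbrandized nonstandard functional interpretation $S_{\st}$, precomposed with a negative translation), as well as the key reason the term extraction works: every axiom of $\B_{0}^{-}$ has an $S_{\st}$-interpretation realised by closed terms of $T$, whereas $\PFTPA$ does not. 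Your sketch is an accurate expansion of what the paper leaves implicit.
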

We finish this section with some notations.  
\begin{nota}[Equality]\label{equ}\rm
The system $\textsf{E-EFA}^{\omega}$ includes equality `$=_{0}$' for numbers as a primitive.  Equality `$=_{\tau}$' for $x^{\tau},y^{\tau}$ is:
\be\label{aparth}
[x=_{\tau}y] \equiv (\forall z_{1}^{\tau_{1}}\dots z_{k}^{\tau_{k}})[xz_{1}\dots z_{k}=_{0}yz_{1}\dots z_{k}],
\ee
if the type $\tau$ is composed as $\tau\equiv(\tau_{1}\di \dots\di \tau_{k}\di 0)$.  Inequality `$\leq_{\tau}$' is \eqref{aparth} with $=_{0}$ replaced by $\leq_{0}$.
Similarly, we define `approximate equality $\approx_{\tau}$' as:
\be\label{aparth2}
[x\approx_{\tau}y] \equiv (\forall^{\st} z_{1}^{\tau_{1}}\dots z_{k}^{\tau_{k}})[xz_{1}\dots z_{k}=_{0}yz_{1}\dots z_{k}]
\ee
\end{nota}
\begin{nota}[Real numbers and related notions in $\B_{0}$]\label{keepintireal}\rm~
\begin{enumerate}
\item Natural numbers correspond to type zero objects.  Rational numbers are defined as quotients of natural numbers, and `$q\in \Q$' has its usual meaning.    
\item A (standard) real number $x$ is a (standard) fast-converging Cauchy sequence $q_{(\cdot)}^{1}$, i.e.\ $(\forall n^{0}, i^{0})(|q_{n}-q_{n+i})|<_{0} \frac{1}{2^{n}})$.  
\item We write `$x\in \R$' to express that $x^{1}=(q^{1}_{(\cdot)})$ is a real as in the previous item and $[x](k):=q_{k}$ for the $k$-th approximation of $x$.    
\item Two reals $x, y$ represented by $q_{(\cdot)}$ and $r_{(\cdot)}$ are \emph{equal}, denoted $x=_{\R}y$, if $(\forall n^{0})(|q_{n}-r_{n}|\leq \frac{1}{2^{n-1}})$. Inequality $<_{\R}$ is defined similarly.         
\item We  write $x\approx y$ if $(\forall^{\st} n^{0})(|q_{n}-r_{n}|\leq \frac{1}{2^{n}})$ and $x\gg y$ if $x>y\wedge x\not\approx y$.  
\item Functions $F:\R\di \R$ are represented by $\Phi^{1\di 1}$ such that 
\be\tag{\textsf{RE}}\label{furg}
(\forall x, y)(x=_{\R}y\di \Phi(x)=_{\R}\Phi(y)).
\ee
\item Sets of natural numbers $X^{1}, Y,^{1} Z^{1}, \dots$ are represented by binary sequences.
\end{enumerate}
\end{nota}

\begin{nota}[Using $\HACint$]\rm
As noted in Notation \ref{klaf}, finite sequences play an important role in $\B_{0}$.   
In particular, $\HACint$ produces a functional which outputs a \emph{finite sequence} of witnesses.  
However, $\HACint$ provides an actual \emph{witnessing functional} assuming (i) $\tau=0$ in $\HACint$ and (ii) the formula $\varphi$ from $\HACint$ is `sufficiently monotone' as in: 
$(\forall^{\st} x^{\sigma},n^{0},m^{0})\big([n\leq_{0}m \wedge\varphi(x,n)] \di \varphi(x,m)\big)$.    
Indeed, in this case one simply defines $G^{\sigma+1}$ by $G(x^{\sigma}):=\max_{i<|F(x)|}F(x)(i)$ which satisfies $(\forall^{\st}x^{\sigma})\varphi(x, G(x))$.  
To save space in proofs, we will sometimes skip the (obvious) step involving the maximum of finite sequences, when applying $\HACint$.  
We assume the same convention for other finite sequences e.g.\ obtained from Theorem \ref{consresultcor}, or the contraposition of idealisation \textsf{I}.  
\end{nota}
 
\section{Reverse Mathematics and Nonstandard Analysis}\label{mikeh}
In Sections \ref{nsac} and \ref{nsac2}, we establish the equivalence between the \emph{nonstandard compactness} of Cantor space and the \emph{Heine-Borel compactness} (for \textbf{any} open cover) of the unit interval.  
The latter essentially predates\footnote{Heine-Borel compactness was studied before 1895 by Cousin (\cite{cousin1}*{p.\ 22}).  The collected works of Pincherle contain a footnote by the editors (\cite{tepelpinch}*{p.\ 67}) stating that the associated \emph{Teorema} (from 1882) corresponds to the Heine-Borel theorem.\label{fottsie}} set theory, and is hence definitely part of `ordinary mathematics' in the sense of RM.  
We establish similar results for theorems based on $\WWKL$ in Section \ref{ferenginar}.  We shall use `computable' in the sense of Kleene's schemes S1-S9 inside $\ZFC$ (\cite{longmann}*{\S5.1.1}).    

\subsection{Nonstandard compactness and the special fan functional}\label{nsac}
The main result of this section is Theorem \ref{massiveBT}, which establishes an equivalence involving the nonstandard compactness of Cantor space and the \emph{special fan functional}, introduced in \cite{samGH} and studied in detail in \cite{dagsam}.   
The variable `$T$' is reserved for trees, and `$T\leq_{1}1$' means that $T$ is a binary tree. 
\bdefi[Special fan functional]\label{dodier}
We define $\SCF(\Theta)$ as follows for $\Theta^{(2\di (0\times 1))}$:
\[
(\forall g^{2}, T^{1}\leq_{1}1)\big[(\forall \alpha \in \Theta(g)(2))  (\overline{\alpha}g(\alpha)\not\in T)
\di(\forall \beta\leq_{1}1)(\exists i\leq \Theta(g)(1))(\overline{\beta}i\not\in T) \big]. 
\]
Any functional $\Theta$ satisfying $\SCF(\Theta)$ is referred to as a \emph{special fan functional}.
\edefi
From a computability theoretic perspective, the main property of $\Theta$ is the selection of $\Theta(g)(2)$ as a finite sequence of binary sequences $\langle f_0 , \dots, f_n\rangle $ such that the neighbourhoods defined from $\overline{f_i}g(f_i)$ for $i\leq n$ form a cover of Cantor space;  almost as a by-product, $\Theta(g)(1)$ can then be chosen to be the maximal value of $g(f_i) + 1$ for $i\leq n$.  No type two functional computes $\Theta$ such that $\SCF(\Theta)$ (\cite{dagsam}), while the following functional can compute $\Theta$ via a term of G\"odel's $T$ (\cite{samflo}). 
\be\tag{$\exists^3$}\label{hah}
(\exists \xi^{3})(\forall Y^{2})\big[  (\exists f^{1})(Y(f)=0)\asa \xi(Y)=0  \big].
\ee
We stress that $g^{2}$ in $\SCF(\Theta)$ may be \emph{discontinuous} and that Kohlenbach has argued for the study of discontinuous functionals in higher-order RM (\cite{kohlenbach2}).  Furthermore, $\RCAo+(\exists \Theta)\SCF(\Theta)$ is conservative over $\WKL_{0}$ (\cite{dagsam, samGH}), and $\Theta$ naturally emerges from Tao's notion of \emph{metastability}, as discussed in \cite{SB, dagsamII, samflo}.  

\smallskip

The special fan functional arose from $\STP$, the \emph{nonstandard compactness} of Cantor space as in \emph{Robinson's theorem} (\cite{loeb1}*{}).  
This fragment of \emph{Standard Part} is also known as the `nonstandard counterpart of weak K\"onig's lemma' (\cite{keisler1}).  
\be\tag{$\STP$}
(\forall \alpha^{1}\leq_{1}1)(\exists^{\st}\beta^{1}\leq_{1}1)(\alpha\approx_{1}\beta),
\ee  
as explained by the equivalence between $\STP$ and \eqref{fanns}, as follows.
\begin{thm}\label{lapdog}
In $\B_{0}^{-}$, $\STP$ is equivalent to the following:
\begin{align}\label{frukkklk}
(\forall^{\st}g^{2})(\exists^{\st}w^{1}\leq_{1}1, k^{0})\big[(\forall T^{1}\leq_{1}1)\big( & (\forall \alpha^{1} \in w)(\overline{\alpha}g(\alpha)\not\in T)\\
&\di(\forall \beta\leq_{1}1)(\exists i\leq k)(\overline{\beta}i\not\in T)\big) \big], \notag
\end{align}  
as well as to the following:
\begin{align}\label{fanns}
(\forall T^{1}\leq_{1}1)\big[(\forall^{\st}n)(\exists \beta)&(|\beta|=n \wedge \beta\in T ) \di (\exists^{\st}\alpha^{1}\leq_{1}1)(\forall^{\st}n)(\overline{\alpha}n\in T)   \big].
\end{align}
Furthermore, $\B_{0}^{-}$ proves $(\exists^{\st}\Theta)\SCF(\Theta)\di \STP$.
\end{thm}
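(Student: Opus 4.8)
The plan is to prove, over $\B_{0}^{-}$, the equivalences $\STP\leftrightarrow\eqref{fanns}$ and $\STP\leftrightarrow\eqref{frukkklk}$, and then to observe that $(\exists^{\st}\Theta)\SCF(\Theta)$ immediately yields \eqref{frukkklk}. The two tools that do all the work are \emph{idealisation} \textsf{I} --- used to collapse a standard-indexed family of approximants into a single internal object --- and \emph{$\HACint$ together with a monotonicity remark} --- used to manufacture a genuinely \emph{standard} functional out of a $\forall^{\st}\exists^{\st}$-statement, exactly as in the remark on ``Using $\HACint$''. No appeal to Transfer $\PFTPA$ is made, so everything stays inside $\B_{0}^{-}$.

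First, $\STP\to\eqref{fanns}$: given a binary tree $T\leq_{1}1$ with a node of every standard length, for a standard finite set $x$ of numbers take a node of $T$ of length $\max(x)$ and pad it with zeroes; idealisation \textsf{I} applied to $(\forall^{\st}x)(\exists\beta\leq_{1}1)(\forall m\in x)(\overline{\beta}m\in T)$ gives an internal $\beta\leq_{1}1$ with $(\forall^{\st}m)(\overline{\beta}m\in T)$, and feeding $\beta$ to $\STP$ yields a standard $\alpha_{0}\leq_{1}1$ with $\alpha_{0}\approx_{1}\beta$, which is then the required standard path since standard-length initial segments of $\alpha_{0}$ and $\beta$ agree. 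Conversely, $\eqref{fanns}\to\STP$ is immediate: for $\alpha\leq_{1}1$ apply \eqref{fanns} to the internal tree $T_{\alpha}:=\{\sigma:\sigma\sqsubseteq\alpha\}$ (which has a node of every length), and the standard path it produces is a standard binary sequence $\approx_{1}\alpha$.

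Next, $\STP\to\eqref{frukkklk}$: fix a standard $g^{2}$ and suppose \eqref{frukkklk} fails for it. Negating and using that $T$ ranges over \emph{trees} (so a node at level $k$ carries all its initial segments), and taking $k:=1+\max_{\alpha\in w}g(\alpha)$, one extracts $(\forall^{\st}w\leq_{1}1)(\exists\beta\leq_{1}1)(\forall\alpha\in w)(\overline{\beta}g(\alpha)\neq\overline{\alpha}g(\alpha))$; idealisation \textsf{I} then produces a single internal $\beta\leq_{1}1$ with $(\forall^{\st}\alpha\leq_{1}1)(\overline{\beta}g(\alpha)\neq\overline{\alpha}g(\alpha))$, and $\STP$ applied to $\beta$ gives a standard $\alpha_{0}\leq_{1}1$ with $\alpha_{0}\approx_{1}\beta$, whence $\overline{\alpha_{0}}g(\alpha_{0})=\overline{\beta}g(\alpha_{0})$ (as $g(\alpha_{0})$ is standard), contradicting the displayed property at $\alpha=\alpha_{0}$. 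The last claim of the theorem is then cheap: if $\Theta$ is standard with $\SCF(\Theta)$, then for standard $g^{2}$ the objects $\Theta(g)(2)$ and $\Theta(g)(1)$ are standard (by the basic `st'-axioms and standardness of terms), and the matrix of $\SCF(\Theta)$ at $g$ is verbatim the matrix of \eqref{frukkklk}; hence \eqref{frukkklk}, and then $\STP$ by the next paragraph.

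The one genuinely delicate direction is $\eqref{frukkklk}\to\STP$, which I expect to be the main obstacle. Assume $\STP$ fails and fix $\alpha\leq_{1}1$ with $(\forall^{\st}\gamma\leq_{1}1)(\exists^{\st}n)(\overline{\gamma}n\neq\overline{\alpha}n)$. The crucial move is to turn this internal $\forall^{\st}\exists^{\st}$-statement into a \emph{standard} functional: apply $\HACint$ and take the maximum of the resulting finite sequences --- legitimate because ``$\overline{\gamma}n\neq\overline{\alpha}n$'' is monotone in $n$ --- to obtain a standard $g^{2}$ with $(\forall^{\st}\gamma\leq_{1}1)(\overline{\gamma}g(\gamma)\neq\overline{\alpha}g(\gamma))$. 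Now feed this $g$ into \eqref{frukkklk}, obtaining standard $w\leq_{1}1$ and $k^{0}$, and test the universal statement against the internal tree $T_{\alpha}=\{\sigma:\sigma\sqsubseteq\alpha\}$: each $\gamma\in w$ is standard, so $\overline{\gamma}g(\gamma)\neq\overline{\alpha}g(\gamma)$ and hence $\overline{\gamma}g(\gamma)\notin T_{\alpha}$; thus \eqref{frukkklk} forces $(\exists i\leq k)(\overline{\alpha}i\notin T_{\alpha})$, which is absurd. So $\STP$ holds. The only other points requiring care are the finite-sequence bookkeeping --- the meaning of ``$w\leq_{1}1$'' under the sequence coding, and passing finite-sequence parameters through \textsf{I} --- which are routine given Notation~\ref{klaf}.
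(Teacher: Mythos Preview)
Your proof is correct. Each of the four implications and the final claim check out: the use of \textsf{I} to extract the ``evasive'' $\beta$ in the $\STP\to\eqref{frukkklk}$ direction is sound (the restriction $\beta\leq_{1}1$ can be folded into the internal matrix), and in the $\eqref{frukkklk}\to\STP$ direction your application of $\HACint$ with the \emph{nonstandard} parameter $\alpha$ is legitimate, since $\HACint$ only requires the matrix to be \emph{internal}, not parameter-free.

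Your route differs from the paper's, however. The paper does not prove $\STP\leftrightarrow\eqref{frukkklk}$ directly; instead it shows $\eqref{fanns}\leftrightarrow\eqref{frukkklk}$ by a purely syntactic quantifier-manipulation: contrapose \eqref{fanns}, observe that a standard $g^{2}$ may be introduced in the antecedent (since $g(\alpha)$ is a standard witness for $n$), rewrite the resulting $(\forall T)(\exists^{\st}\alpha,k)[\dots]$ using the \emph{contrapositive} of \textsf{I} to pull the standard existentials in front of $(\forall T)$, and collect the witnesses into $w,k$. The reverse direction simply pushes the quantifiers back in. This exhibits \eqref{frukkklk} as a normal form of the contraposition of \eqref{fanns}, with no proof by contradiction and no appeal to $\HACint$. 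Your argument, by contrast, proves $\STP\leftrightarrow\eqref{frukkklk}$ directly via two reductio arguments, using \textsf{I} for one direction and $\HACint$ (with a nonstandard parameter) for the other. The paper's approach is more structural and makes the role of idealisation as a quantifier-shifting device transparent; yours is more hands-on and arguably shows more clearly \emph{why} the specific tree $T_{\alpha}$ and the functional $g$ built from the failing $\alpha$ are the right test objects.
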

\begin{proof}
A detailed proof may be found in any of the following: \cites{dagsam,samGH, SB}.  In a nutshell, the implication \eqref{frukkklk}$\leftarrow$\eqref{fanns} follows by taking the contraposition of the latter and introducing standard $g^{2}$ in the antecedent of the resulting formula.  
One then uses \emph{Idealisation} \textsf{I} to pull the standard quantifiers to the front and obtains \eqref{frukkklk}.  The other implication follows by pushing the standard quantifiers in the latter back inside.   
For the remaining implication $\STP\di \eqref{fanns}$ (the other one and the final part then being trivial), one uses \emph{overspill} (See \cite{brie}*{\S3}) to obtain a sequence of nonstandard length for a tree $T\leq_{1}1$ satisfying the antecedent of  \eqref{fanns}, and $\STP$ converts this sequence into a standard path in $T$.  \qed
\end{proof}
For the below results, we need the following corollary which expresses the (trivial but important) fact that the type of the universal quantifier in $\STP$ (and equivalent formulations) may be lowered.  We view $\alpha^{0}\leq_{0}1$ as a finite binary sequence; we define $\hat{\alpha}$ to be $\alpha*00\dots$, i.e.\ the type one object obtained by concatenating $\alpha$ with $0^{1}$.  Similarly, $T^{0}\leq_{0}1$ is a binary tree of type zero, and $\SCF_{0}(\Theta)$ is the specification of $\Theta$ restricted to trees $T^{0}\leq_{0}1$.    
\begin{cor}\label{corcorcor}
In $\B_{0}^{-}$, $\STP$ is equivalent to $(\forall \alpha^{0}\leq_{0}1)(\exists^{\st}\beta^{1}\leq1)(\hat{\alpha}\approx_{1}\beta)$, and also to the following:
\begin{align}\label{fanns2}
(\forall T^{0}\leq_{0}1)\big[(\forall^{\st}n)(\exists \beta)&(|\beta|=n \wedge \beta\in T ) \di (\exists^{\st}\alpha^{1}\leq_{1}1)(\forall^{\st}n)(\overline{\alpha}n\in T)   \big], 
\end{align}
and also the following:
\begin{align}\label{frukkklk3}
(\forall^{\st}g^{2})(\exists^{\st}w^{1}\leq_{1}1, k^{0})\big[(\forall T^{0}\leq_{0}1)\big( & (\forall \alpha^{1} \in w)(\overline{\alpha}g(\alpha)\not\in T)\\
&\di(\forall \beta\leq_{1}1)(\exists i\leq k)(\overline{\beta}i\not\in T)\big) \big].   \notag
\end{align}  
The system $\eefa^{\omega}+\QFAC^{1,0}$ proves $(\exists \Theta)\SCF(\Theta)\asa (\exists \Theta_{0})\SCF_{0}(\Theta_{0})$.  
\end{cor}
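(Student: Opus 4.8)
The plan is to leverage Theorem~\ref{lapdog} and to reduce each new equivalence to one of two routine manoeuvres: embedding a finite (type-zero) tree into a type-one tree by padding its characteristic function with zeros, and truncating a type-one tree to a type-zero one at a suitable height. Throughout I use that $\B_{0}^{-}$ proves the existence of a nonstandard $M^{0}$ (apply Idealisation \textsf{I} with the internal formula $z<_{0}y$, the maximum of a finite sequence being a term).

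For $\STP\leftrightarrow(\forall\alpha^{0}\leq_{0}1)(\exists^{\st}\beta^{1}\leq1)(\hat{\alpha}\approx_{1}\beta)$: the forward direction is immediate, since $\hat{\alpha}$ is a type-one binary sequence to which $\STP$ applies verbatim. For the converse, fix $\alpha^{1}\leq_{1}1$ and a nonstandard $M^{0}$, and put $\gamma^{0}:=\overline{\alpha}M\leq_{0}1$. Every standard number lies below $M$, so by the definition of $\approx_{1}$ one has $\widehat{\overline{\alpha}M}\approx_{1}\alpha$; the hypothesis applied to $\gamma^{0}$ gives a standard $\beta\leq_{1}1$ with $\widehat{\overline{\alpha}M}\approx_{1}\beta$, and transitivity of $\approx_{1}$ yields $\alpha\approx_{1}\beta$, i.e.\ $\STP$.

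For $\eqref{fanns}\leftrightarrow\eqref{fanns2}$ and $\eqref{frukkklk}\leftrightarrow\eqref{frukkklk3}$ (which, together with Theorem~\ref{lapdog}, complete the first part): in each case the direction from the type-one to the type-zero form is the zero-padding embedding, under which `$\sigma\notin T^{0}$' and `$\sigma\notin T^{1}$' coincide, so all antecedents and conclusions transfer. For the reverse direction, given $T^{1}\leq_{1}1$ satisfying the relevant internal, height-bounded hypothesis, set $T^{0}:=\overline{T^{1}}M$, with $M$ nonstandard in the case of \eqref{fanns2}, and $M:=2^{N+1}$ with $N$ a common bound for the returned $k$ and the heights $g(\alpha)$ with $\alpha$ in the returned finite cover in the case of \eqref{frukkklk3}; then $T^{0}\leq_{0}1$ is again a binary tree whose nodes are exactly those of $T^{1}$ of code below $M$. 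The standard-length nodes of $T^{1}$ have standard code, hence lie below $M$, so $T^{0}$ inherits the hypothesis; applying the type-zero statement and reading the resulting standard path (resp.\ the bound) back through `$\sigma\in T^{0}\Rightarrow\sigma\in T^{1}$' gives the type-one statement. The only delicate point is that $\overline{T^{1}}M$ must be a genuine object of $\eefa^{\omega}$, which requires $2^{M}$; since $M$ is at worst a fixed tower of exponentials in the data, this is available from $\textsf{EXP}$.

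Finally, for $(\exists\Theta)\SCF(\Theta)\asa(\exists\Theta_{0})\SCF_{0}(\Theta_{0})$ over $\eefa^{\omega}+\QFAC^{1,0}$: the implication from $\SCF$ to $\SCF_{0}$ again uses the zero-padding embedding, so the same $\Theta$ works. For the converse, take $\Theta_{0}$ with $\SCF_{0}(\Theta_{0})$ and replace it by the term $\Theta_{0}'$ with $\Theta_{0}'(g)(2):=\Theta_{0}(g)(2)$ and $\Theta_{0}'(g)(1):=\max\big(\{\Theta_{0}(g)(1)\}\cup\{g(\alpha)+1:\alpha\in\Theta_{0}(g)(2)\}\big)$; since enlarging the numerical bound only weakens the conclusion, $\SCF_{0}(\Theta_{0}')$ holds. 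One extracts the finite-subcover property $(\forall\beta\leq_{1}1)(\exists\alpha\in\Theta_{0}'(g)(2))(\overline{\beta}g(\alpha)=\overline{\alpha}g(\alpha))$ by a counterexample argument: an uncovered $\gamma^{*}$ makes the single-branch tree $\{\overline{\gamma^{*}}m:m\leq\Theta_{0}'(g)(1)\}$ satisfy the antecedent of $\SCF_{0}(\Theta_{0}')$ while violating its conclusion at $\beta:=\gamma^{*}$. With the subcover property and $\Theta_{0}'(g)(1)>g(\alpha)$ in hand, $\SCF(\Theta_{0}')$ follows for arbitrary $T^{1}\leq_{1}1$: any $\beta$ agrees with some $\alpha\in\Theta_{0}'(g)(2)$ up to $g(\alpha)$, so $\overline{\alpha}g(\alpha)\notin T^{1}$ forces $\overline{\beta}g(\alpha)\notin T^{1}$ with $g(\alpha)\leq\Theta_{0}'(g)(1)$. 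I expect the main obstacle to be bookkeeping rather than depth: ensuring the finite trees in the counterexample argument are legitimately coded (again only a fixed tower of exponentials, hence fine under $\textsf{EXP}$) and that the bound is padded before the subcover is extracted, so that the closing step goes through.
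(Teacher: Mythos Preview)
Your proof is correct, and for the nonstandard equivalences (between $\STP$, its type-zero variant, \eqref{fanns2}, and \eqref{frukkklk3}) your truncation/padding manoeuvres are essentially the paper's own argument, only routed slightly differently: the paper derives $\eqref{fanns2}\leftrightarrow\eqref{frukkklk3}$ by repeating the proof of $\eqref{fanns}\leftrightarrow\eqref{frukkklk}$ verbatim and then closes the cycle via $\eqref{fanns2}\Rightarrow\eqref{fanns}$ by truncating $T^{1}$ at a nonstandard height, whereas you handle $\eqref{frukkklk3}\Rightarrow\eqref{frukkklk}$ directly with a \emph{standard} truncation height determined by $k$ and the $g(\alpha)$'s. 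Both are fine.

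The genuine divergence is in the last sentence. The paper obtains $(\exists\Theta)\SCF(\Theta)\leftrightarrow(\exists\Theta_{0})\SCF_{0}(\Theta_{0})$ over $\eefa^{\omega}+\QFAC^{1,0}$ \emph{metatheoretically}, by applying the term-extraction result (Theorem~\ref{consresultcor}) to the already-established $\B_{0}^{-}\vdash\eqref{frukkklk}\leftrightarrow\eqref{frukkklk3}$; this is a one-line ``soft'' argument that exhibits the internal equivalence as a free consequence of the nonstandard one. You instead give a fully explicit internal construction: pad the numerical bound so that $\Theta_{0}'(g)(1)$ dominates every $g(\alpha)$, derive the finite-subcover property by feeding $\Theta_{0}'$ the single-branch type-zero tree of a putative uncovered $\gamma^{*}$, and then verify $\SCF(\Theta_{0}')$ for arbitrary $T^{1}$ directly. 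Your route is more elementary and self-contained (no appeal to the extraction metatheorem, and it makes the witnessing term visible), while the paper's route is more conceptual and illustrates the general methodology of the paper, namely that internal statements fall out of their nonstandard counterparts via Theorem~\ref{consresultcor}.
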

\begin{proof}
Now, $\eqref{fanns2}\asa \eqref{frukkklk3}$ follows in the same way as for $\eqref{fanns}\asa \eqref{frukkklk}$.
The first forward implication is trivial while the first reverse implication follows by considering $\overline{\alpha}N$ for nonstandard $N^{0}$ and $\alpha^{1}\leq_{1}1$.
The implication $\STP\di \eqref{fanns2}$ follows in the same way as in the proof of the theorem.  Note that $T^{0}\leq_{0}1$ as in the antecedent of \eqref{fanns2} must be nonstandard by the basic axioms in Definition~\ref{defke}.  The implication $\eqref{fanns2}\di \eqref{fanns}$ follows by restricting $T^{1}$ to sequences of some fixed nonstandard length, which yields a type zero object.  The final equivalence follows by applying Theorem \ref{consresultcor} to `$\B_{0}^{-}\vdash \eqref{frukkklk}\asa \eqref{frukkklk3}$'.  
\qed
\end{proof}
The following theorem was proved in \cite{bennosam, samcie} using respectively the \emph{Suslin functional} and \emph{Turing jump functional} $\exists^{2}$, rather than the \emph{much weaker} fan functional $(\FF)$, 
 where `$Y^{2}\in \textsf{cont}$' means that $Y$ is continuous on $\N^{\N}$.
\be\tag{$\textsf{\textup{FF}}$}\label{FF}
(\exists \Phi^{3})(\forall Y^{2}\in \textsf{\textup{cont}})(\forall f, g\leq 1)(\overline{f}\Phi(Y)=\overline{g}\Phi(Y)\di Y(f)=Y(g))
\ee
\be\tag{$\exists^{2}$}
(\exists \varphi^{2})(\forall f^{1})\big[(\exists n)(f(n)=0)\asa \varphi(f)=0  \big].
\ee
Note that the base theory for the equivalence is conservative over $\WKL_{0}^{*}$, i.e.\ the first-order strength is that of elementary function arithmetic.  
Another noteworthy fact is that $\STP$ deals with second-order objects, while $\Theta$ is fourth-order.   
\begin{thm}\label{massiveBT}
The system $\B_{0}+(\FF)+\QFAC^{2,1}$ proves $ \STP\asa (\exists \Theta)\SCF(\Theta)$, while the system $\B_{0}^{-}+(\exists^{3})+\QFAC$ does not.  
\end{thm}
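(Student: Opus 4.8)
The plan is to prove the two halves of the equivalence separately inside $\B_0+(\FF)+\QFAC^{2,1}$, and then to deduce the negative claim from the fact that $(\exists\Theta)\SCF(\Theta)$ is already provable from $(\exists^3)$ whereas $\STP$ is not provable over $\B_0^-+(\exists^3)+\QFAC$. The implication $(\exists\Theta)\SCF(\Theta)\to\STP$ is the easy half and needs only $\B_0$: since $\SCF(\Theta)$ is internal with $\Theta$ as its sole free variable, applying $\PFTPA$ to $\neg\SCF(\Theta)$ (in contraposed form) turns $(\exists\Theta)\SCF(\Theta)$ into $(\exists^{\st}\Theta)\SCF(\Theta)$, and the last sentence of Theorem \ref{lapdog} then yields $\STP$.

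For $\STP\to(\exists\Theta)\SCF(\Theta)$ I would first invoke Corollary \ref{corcorcor} to replace the target by $(\exists\Theta_0)\SCF_0(\Theta_0)$ and to rewrite $\STP$ as \eqref{frukkklk3}. Because trees $T^0\leq_0 1$ are \emph{finite}, the matrix of \eqref{frukkklk3} (equivalently of $\SCF_0$) is, provably in $\eefa^\omega$, equivalent to a quantifier-free formula $\psi_0(g,w,k)$ expressing that the finitely many strings $\overline{\alpha}g(\alpha)$ with $\alpha\in w$ block every binary string of length $k$; I would record this reduction, together with the monotonicity of $\psi_0$ in $(w,k)$, as a lemma. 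From \eqref{frukkklk3} one gets $(\forall^{\st}g^2)(\exists w^1\leq_1 1,k^0)\psi_0(g,w,k)$, hence trivially $(\forall^{\st}g^2)\big[g\text{ discontinuous}\vee(\exists w^1\leq_1 1,k^0)\psi_0(g,w,k)\big]$; this disjunction is internal with $g$ its only free variable, so $\PFTPA$ promotes it to $(\forall g^2)\big[g\text{ discontinuous}\vee(\exists w^1\leq_1 1,k^0)\psi_0(g,w,k)\big]$. Now for a \emph{continuous} $g^2$, $(\FF)$ supplies a modulus of uniform continuity of $g$ on Cantor space, from which a term of G\"odel's $T$ computes a bound on $g$ over Cantor space and an explicit pair $(w,k)$ with $\psi_0(g,w,k)$; for discontinuous $g^2$ the relevant disjunct already provides such a pair. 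Hence $(\forall g^2)(\exists w^1\leq_1 1,k^0)\psi_0(g,w,k)$, and — coding the pair $(w,k)$ as a single object of type $1$ and using that $\psi_0$ is quantifier-free — $\QFAC^{2,1}$ produces $\Theta_0$ with $\SCF_0(\Theta_0)$. Corollary \ref{corcorcor} then returns $(\exists\Theta)\SCF(\Theta)$.

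For the negative part, recall from \cite{samflo} that $(\exists^3)$ computes a special fan functional via a term of G\"odel's $T$, so $\B_0^-+(\exists^3)+\QFAC\vdash(\exists\Theta)\SCF(\Theta)$; if the equivalence were provable there we would obtain $\B_0^-+(\exists^3)+\QFAC\vdash\STP$. But $\B_0^-$ proves $\STP\leftrightarrow\eqref{frukkklk3}$, and \eqref{frukkklk3} has the shape $(\forall^{\st}g)(\exists^{\st}w,k)\,\psi_0(g,w,k)$ with $\psi_0$ internal; hence by Theorem \ref{consresultcor} (with $\Delta_{\intern}=\{(\exists^3)\}\cup\QFAC$) a term $t$ of G\"odel's $T$ \emph{not mentioning $\exists^3$} would satisfy $\eefa^\omega+\QFAC+(\exists^3)\vdash(\forall g^2)(\exists (w,k)\in t(g))\psi_0(g,w,k)$. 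Such a $t$ would select, uniformly in an arbitrary — possibly discontinuous — $g^2$, a finite sub-cover of the canonical cover of Cantor space induced by $g$, contradicting the non-computability results of \cite{dagsam} (nothing at the level of G\"odel's $T$, i.e.\ without $(\exists^3)$, computes the special fan functional). Hence $\B_0^-+(\exists^3)+\QFAC$ does not prove the equivalence.

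The main obstacle is the passage from standard to arbitrary $g^2$ in the forward direction: the naive route via $\HACint$ yields only a functional correct on \emph{standard} inputs, and $\PFTPA$ — being parameter-free — cannot transfer a statement mentioning that functional; it is precisely here that one needs $(\FF)$ (to handle the continuous covers, the one case left open by the parameter-free disjunction) and $\QFAC^{2,1}$ (to assemble the pointwise data into a single $\Theta_0$). A secondary delicate point is the bookkeeping around Corollary \ref{corcorcor} — keeping track of the $\SCF_0$-versus-$\SCF$ and type-zero-versus-type-one reductions — and, for the negative claim, making precise the sense in which G\"odel's $T$ without $(\exists^3)$ cannot compute finite sub-covers uniformly in $g$.
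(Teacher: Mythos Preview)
Your reverse implication and negative claim follow the paper. The forward implication has a genuine gap: after transferring the disjunction you have $(\forall g^{2})\big[\,g\text{ discontinuous}\vee(\exists w,k)\psi_{0}(g,w,k)\,\big]$, and you then claim that for discontinuous $g$ ``the relevant disjunct already provides such a pair''. This is false: for discontinuous $g$ the disjunction is witnessed by its \emph{first} disjunct, which supplies no $(w,k)$ at all. Your use of $(\FF)$ covers only the continuous inputs, so $(\forall g)(\exists w,k)\psi_{0}$ remains unproved. The paper avoids this by splitting \emph{globally} on $(\exists^{2})\vee\neg(\exists^{2})$: under $(\exists^{2})$ the matrix of \eqref{frukkklk3} becomes quantifier-free via $\exists^{2}$, one drops the inner `st', applies $\PFTPA$ (the resulting internal formula has only $g$ free), and then $\QFAC^{2,1}$; under $\neg(\exists^{2})$, \emph{every} $g^{2}$ is continuous, so $(\FF)$ applies to all $g$ and one defines $\Theta$ explicitly from the fan functional $\Phi$ without invoking $\STP$. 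The crucial feature of the global split is that in the second branch there \emph{are} no discontinuous $g$ to worry about; your per-$g$ split loses exactly this.

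Curiously, your own observation that the matrix of \eqref{frukkklk3} is already equivalent in $\eefa^{\omega}$ to a quantifier-free $\psi_{0}(g,w,k)$---namely, that every length-$k$ binary string extends some $\overline{\alpha}g(\alpha)$ with $\alpha\in w$---is correct and would let you sidestep the problem altogether. Since $(\exists w,k)\psi_{0}(g,w,k)$ is then an internal formula with $g$ as its sole free variable, $\PFTPA$ applies to it \emph{directly}, giving $(\forall g)(\exists w,k)\psi_{0}$, after which $\QFAC^{2,1}$ finishes; no case split and no $(\FF)$ would be needed. You set this up but then took an unnecessary, and ultimately broken, detour through the disjunction.
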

\begin{proof}
The reverse implication is immediate using $\PFTPA$ and Theorem \ref{lapdog}.  
We now prove the forward implication in $\B_{0}+(\FF)+\QFAC^{2,1}$, first additionally assuming $(\exists^{2})$ and then again additionally assuming $\neg(\exists^{2})$.   The law of excluded middle then yields this implication over $\B_{0}+(\FF)+\QFAC^{2,1}$.
Hence, assume $(\exists^{2})$ and note that $\STP$ implies \eqref{frukkklk3} by Corollary \ref{corcorcor}.
Drop the second `st' in \eqref{frukkklk3}, and apply $\PFTPA$ to the resulting formula to obtain
\begin{align}\label{frukkklk2}
(\forall g^{2})(\exists w^{1}\leq_{1}1, k^{0})\big[(\forall T^{0}\leq_{0}1)[ & (\forall \alpha^{1} \in w)(\overline{\alpha}g(\alpha)\not\in T)\\
&\di(\forall \beta\leq_{1}1)(\exists i\leq k)(\overline{\beta}i\not\in T)] \big], \notag
\end{align} 
where the formula in big square brackets is equivalent to a quantifier-free one, thanks to $(\exists^{2})$.
Apply $\QFAC^{2,1}$ to \eqref{frukkklk2} to obtain $\Theta_{0}$ producing $w^{1}, k^{0}$ from $g^{2}$ as in \eqref{frukkklk2}.  Corollary \ref{corcorcor} yield $(\exists \Theta)\SCF(\Theta)$.  
Again for the forward implication, assume $\neg(\exists^{2})$ and note that all functions on $\N^{\N}$ are continuous by \cite{kohlenbach2}*{Cor.\ 3.7}.
Hence, the fan functional $\Phi$ as in $(\FF)$ applies to all functions on $\N^{\N}$, and we may define $\Theta(g)(2)$ as consisting of all $2^{\Phi(g)}$ binary sequences $\sigma*00\dots$ where $|\sigma|=\Phi(g)$ and $\Theta(g)(1):=\Phi(g)$.
The forward implication now follows.  

\smallskip

The non-implication follows from \cite{dagsam}*{Theorem 4.2} as the latter expresses that the special fan functional is not computable in any type two functional.  Indeed, $\STP$ is equivalent to \eqref{frukkklk} by Theorem \ref{lapdog} and applying Theorem \ref{consresultcor} to 
$\B_{0}^{-}+(\exists^{3})+\QFAC+(\exists \Theta)\SCF(\Theta)\vdash \eqref{frukkklk}$, 
one obtains a term $t$ of G\"odel's $T$ such that $\SCF(t)$, which is impossible.  \qed
\end{proof}
%
%

\subsection{Nonstandard compactness and Heine-Borel compactness}\label{nsac2}
We prove an equivalence between $\STP$ and the Heine-Borel theorem \emph{in the general\footnote{The Heine-Borel theorem in RM is restricted to \emph{countable} covers (\cite{simpson2}*{IV.1}).} case}, i.e.\ the statement that any (possibly uncountable) 
open cover of the unit interval has a finite sub-cover.  In particular, any $\Psi:\R\di \R^{+}$ gives rise to a `canonical' open cover $\cup_{x\in [0,1]}I_{x}$ of $[0,1]$ where $I_{x}^{\Psi}\equiv (x-{\Psi(x)}, x+{\Psi(x)})$. 
Hence, the Heine-Borel theorem trivially implies the following statement: 
\be\label{zosimpelistnie}\tag{$\HBU$}\textstyle
(\forall \Psi:\R\di \R^{+})(\exists w^{1}){(\forall x\in [0,1])(\exists y\in w)(x\in I_{y}^{\Psi})}. 
\ee
By Footnote \ref{fottsie}, $\HBU$ is part of ordinary mathematics as it predates set theory.  
Furthermore, $\HBU$ is equivalent to many basic properties of the \emph{gauge integral} (\cite{dagsamIII}).  The latter is an extension of Lebesgue's integral and provides a (direct) formalisation of the 
Feyman path integral.  Note that the following theorem was proved using $(\exists^{2})$ in the base theory in \cite{samcie}. 
\begin{thm}\label{coreBT}
The system $\B_{0}+\QFAC^{2,1}$ proves that $\STP\asa \HBU\asa \HBU^{\st}$, while the system $\B_{0}^{-}+(\exists^3)+\QFAC$ does not.  
\end{thm}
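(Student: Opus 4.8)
The plan is to prove the positive part through the implication cycle $\STP\di\HBU^{\st}\di\HBU\di\STP$ inside $\B_{0}+\QFAC^{2,1}$, and to deduce the non-implication from Theorem~\ref{massiveBT}; here $\HBU^{\st}$ is the `standard' variant $(\forall^{\st}\Psi:\R\di\R^{+})(\exists^{\st}w^{1})(\forall x\in[0,1])(\exists y\in w)(x\in I_{y}^{\Psi})$ of \eqref{zosimpelistnie}. Throughout I use the standard continuous surjection $\rho$ from Cantor space onto $[0,1]$ given by $\rho(f):=\sum_{i}f(i)2^{-i-1}$, fixing for each $x\in[0,1]$ a binary expansion $f_{x}$ with $\rho(f_{x})=_{\R}x$ (the double representation at dyadic rationals is harmless and removed by always taking, say, the eventually-$0$ expansion); recall that $\rho([\sigma])$ is an interval of length $2^{-|\sigma|}$ containing $\rho(\hat{\sigma})$, so $\rho([\overline{f}k])\subseteq I_{\rho(f)}^{\Psi}$ whenever $2^{-k}<\Psi(\rho(f))$.

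For $\STP\di\HBU^{\st}$ I would argue in $\B_{0}$ as follows. Given standard $\Psi:\R\di\R^{+}$ and standard $f\leq_{1}1$, the real $\rho(f)$ and the positive number $\Psi(\rho(f))$ are standard, so $(\exists^{\st}k)\,2^{-k}<\Psi(\rho(f))$; applying $\HACint$ (and passing to the maximum of the resulting finite sequence) produces a \emph{standard} $g^{2}$ with $(\forall^{\st}f\leq_{1}1)\big(\rho([\overline{f}g(f)])\subseteq I_{\rho(f)}^{\Psi}\big)$. Instantiating the equivalent form \eqref{frukkklk3} of $\STP$ from Corollary~\ref{corcorcor} with this standard $g$ yields standard $w_{0}\leq_{1}1$ and $k^{0}$; instantiating its inner $\forall T^{0}\leq_{0}1$ with the (nonstandard-height) finite tree $T^{0}$ of strings of length $\leq N$ extending no $\overline{\alpha}g(\alpha)$ with $\alpha\in w_{0}$ — note $\overline{\alpha}g(\alpha)\notin T^{0}$, and $N$ exceeds the standard numbers $g(\alpha)$ and $k$ — shows that every $\beta\leq_{1}1$ lies in some $[\overline{\alpha}g(\alpha)]$ with $\alpha\in w_{0}$. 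Since the components of the standard finite sequence $w_{0}$ are standard by the basic axioms, the modulus property of $g$ applies to each such $\alpha$, and pushing this cover forward through $\rho$ gives the standard finite sequence $w:=\langle\rho(\alpha):\alpha\in w_{0}\rangle$ with $(\forall x\in[0,1])(\exists y\in w)(x\in I_{y}^{\Psi})$, which is $\HBU^{\st}$.

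For $\HBU^{\st}\di\HBU$, note that $\HBU$ is $(\forall\Psi)\chi(\Psi)$ with $\chi(\Psi)\equiv(\exists w^{1})(\forall x\in[0,1])(\exists y\in w)(x\in I_{y}^{\Psi})$ internal and $\Psi$ its only free variable, so $\PFTPA$ reduces $\HBU$ to $(\forall^{\st}\Psi)\chi(\Psi)$, which follows from $\HBU^{\st}$ by dropping `$\st$' on $w$. For $\HBU\di\STP$, I would first note that, internally, $\HBU$ gives $(\forall g^{2})(\exists w\leq_{1}1,k^{0})\,R_{0}(g,w,k)$, where $R_{0}$ is the matrix of \eqref{frukkklk3}: for each $g$ apply $\HBU$ to the gauge $\Psi_{g}(x):=2^{-g(f_{x})}$ and translate the finite subcover back through $\rho$. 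By the law of excluded middle I then split on $(\exists^{2})$, as in the proof of Theorem~\ref{massiveBT}. If $(\exists^{2})$ holds, then $R_{0}$ is equivalent to a quantifier-free formula (the quantifier $\forall T^{0}\leq_{0}1$ is decided using $\exists^{2}$), so $\QFAC^{2,1}$ yields $W^{2\di1}$ with $(\forall g)\,R_{0}(g,W(g))$, and reading off the two components of $W(g)$ defines, by a term of G\"odel's $T$, a $\Theta_{0}$ with $\SCF_{0}(\Theta_{0})$. If $\neg(\exists^{2})$, then all functions on $\N^{\N}$ are continuous by \cite{kohlenbach2}*{Cor.\ 3.7}, and $\HBU$ applied to the (now continuous) gauges furnishes a modulus-of-uniform-continuity functional on Cantor space, from which one derives $\SCF_{0}(\Theta_{0})$ along the lines of the $\neg(\exists^{2})$ case of Theorem~\ref{massiveBT}. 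In either case Corollary~\ref{corcorcor} upgrades $\SCF_{0}(\Theta_{0})$ to $(\exists\Theta)\SCF(\Theta)$, Transfer $\PFTPA$ then gives $(\exists^{\st}\Theta)\SCF(\Theta)$ (as $\SCF(\Theta)$ is internal with $\Theta$ its only free variable), and $\STP$ follows by Theorem~\ref{lapdog}.

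Finally, the non-implication: the derivation of $(\exists\Theta)\SCF(\Theta)$ from $\HBU$ just sketched uses only $\B_{0}^{-}+\QFAC^{2,1}$ together with an instance of excluded middle, both available in $\B_{0}^{-}+(\exists^{3})+\QFAC$, so if the latter proved $\STP\asa\HBU$ it would prove $\STP\di(\exists\Theta)\SCF(\Theta)$, contradicting Theorem~\ref{massiveBT}. I expect the main work to lie in manufacturing the \emph{standard} gauge $g$ via $\HACint$ in the first step and in checking that the push-forward along the non-injective surjection $\rho$ covers \emph{all} of $[0,1]$ while keeping every object standard; a second delicate point is the $\neg(\exists^{2})$ branch of $\HBU\di\STP$, where — unlike in Theorem~\ref{massiveBT} — the fan functional $(\FF)$ is not part of the base theory and must be recovered from $\HBU$ together with the continuity of all functionals, and the $(\exists^{2})$ branch, where one must verify that $R_{0}$ is genuinely quantifier-free relative to $\exists^{2}$ so that $\QFAC^{2,1}$ applies.
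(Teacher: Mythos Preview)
Your route differs from the paper's in the step $\HBU\di\STP$: you factor through the special fan functional ($\HBU\di(\exists\Theta)\SCF(\Theta)\di\STP$), whereas the paper stays entirely at the level of reals. Under $(\exists^{2})$ the paper passes from $\HBU$ to \eqref{leiferik} with quantifier-free matrix, applies $\QFAC^{2,1}$ and then $\PFTPA$ to obtain a \emph{standard} cover-selector $\Phi^{2\di1}$ as in \eqref{leiferik3}, and a short $\HACint$-contradiction then yields $(\forall q^{0}\in[0,1])(\exists^{\st}y)(q\approx y)$, whence $\STP$ via binary expansions; no type-three functional is ever constructed. This matters because your acknowledged ``delicate point'' in the $\neg(\exists^{2})$ branch is a real gap: to build $\Theta_{0}$ you would need $\QFAC^{2,1}$ on a matrix (``$N$ is a modulus of uniform continuity for $g$ on $2^{\N}$'', or your $R_{0}$) that is genuinely $\Pi^{0}_{1}$ and not reducible to quantifier-free without $(\exists^{2})$, and you cannot fall back on the $(\FF)$-based construction of Theorem~\ref{massiveBT} since $(\FF)$ is not assumed here. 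The paper's forward direction $\STP\di\HBU$ is also more direct than yours: from $\STP$ one gets $(\forall x\in[0,1])(\exists^{\st}y)(x\approx y)$, Idealisation~\textsf{I} gives the standard finite sub-cover \eqref{leiferikson2}, and dropping `$\st$' plus $\PFTPA$ finishes; your $\HACint$-built standard $g$ plus \eqref{frukkklk3} is correct but more elaborate.

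Your non-implication argument also misfires. You conclude that a proof of $\STP\asa\HBU$ in $\B_{0}^{-}+(\exists^{3})+\QFAC$ would yield $\STP\di(\exists\Theta)\SCF(\Theta)$ there and claim this contradicts Theorem~\ref{massiveBT}; but inspect the proof of that theorem's non-implication: what fails in $\B_{0}^{-}$ is the \emph{other} direction $(\exists\Theta)\SCF(\Theta)\di\STP$ (it is $(\exists\Theta)\SCF(\Theta)$ that is added to the hypotheses before term extraction). The paper's argument is instead: if $\B_{0}^{-}+(\exists^{3})+\QFAC$ proved $\HBU\di\STP$, then $\B_{0}^{-}+(\exists^{3})+\HBU+\QFAC\vdash\eqref{frukkklk}$, and Theorem~\ref{consresultcor} extracts a closed G\"odel-$T$ term $t$ with $\SCF(t)$ holding in the full type structure, contradicting that no type-two functional (hence no such term) computes a special fan functional \cite{dagsam}*{Theorem~4.2}. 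Your argument can be repaired along these lines, but as written the appeal to Theorem~\ref{massiveBT} does not close the loop.
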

\begin{proof}
For the first reverse implication, we use the same `excluded middle trick' as in the proof of Theorem \ref{massiveBT}.  
Hence, assuming $\neg(\exists^{2})$, all functionals on $\R$ are continuous, and $\STP\di \WKL^{\st}\di\WKL$ (See \cite{bennosam}*{Theorem 3.7}) yields that all functionals on $[0,1]$ are uniformly continuous by \cite{kohlenbach4}*{\S4}, and $\HBU$ is immediate.    
Next, assuming $(\exists^{2})$, note the latter allows us to (uniformly) convert reals into their binary representation (choosing the one with trailing zeros in case of non-uniqueness).  
Hence, any type two functional can be modified to satisfy \eqref{furg} from Notation~\ref{keepintireal} if necessary.  Hence, $\HBU$ immediately generalises to \emph{any} $\Psi^{2}$.   
Now, for the reverse implication, note that $\HBU$ trivially implies
\be\label{leiferik}\textstyle
(\forall \Psi^{2})(\exists w^{1})\underline{(\forall q^{0}\in [0,1])(\exists y\in w)(|q-y|<\frac{1}{\Psi(y)+1})}, 
\ee
where the underlined formula in \eqref{leiferik} may be treated as quantifier-free, due to the presence of $(\exists^{2})$ in the base theory.  
Applying $\QFAC$ to \eqref{leiferik}, we obtain:  
\be\label{leiferik2}\textstyle
(\exists\Phi^{2\di 1})(\forall \Psi^{2}){(\forall q^{0}\in [0,1])(\exists y\in \Phi(\Psi))(|q-y|<\frac{1}{\Psi(y)+1})}, 
\ee
and applying $\PFTPA$ to \eqref{leiferik2} implies that 
\be\label{leiferik3}\textstyle
(\exists^{\st}\Phi^{2\di 1})(\forall \Psi^{2}){(\forall q^{0}\in [0,1])(\exists y\in \Phi(\Psi))(|q-y|<\frac{1}{\Psi(y)+1})}, 
\ee
and we now show that \eqref{leiferik3} implies $\STP$.  Since standard functionals yield standard outputs for standard inputs by Definition \ref{defke}, \eqref{leiferik3} immediately implies
\[\textstyle
(\forall^{\st} \Psi^{2}){(\forall q^{0}\in [0,1])(\exists^{\st} y^{1}\in [0,1])(|q-y|<\frac{1}{\Psi(y)+1})}.  
\]
Now, $(\forall^{\st}\Psi^{2})(\exists^{\st} y\in [0,1])(|q-y|<\frac{1}{\Psi(y)+1})$ implies $(\exists^{\st}y\in [0,1])(q\approx y)$; 
indeed, $(\forall^{\st}y\in [0,1])(q\not\approx y)$ implies $(\forall^{\st}y\in [0,1])(\exists^{\st} k^{0})(|q-y|\geq \frac{1}{k})$, and applying $\HACint$ yields standard $\Xi^{2}$
such that $(\forall^{\st}y\in [0,1])(\exists  k^{0}\in \Xi(y))(|q-y|\geq \frac{1}{k})$.  Defining standard $\Psi_{0}^{2}$ as $\Psi_{0}(y):=\max_{i<|\Xi(y)|}\Xi(y)(i)$, we obtain $(\forall^{\st}y\in [0,1]))(|q-y|\geq \frac{1}{\Psi_{0}(y)+1})$, a contradiction.  
Hence, we have proved $(\forall q^{0}\in [0,1])(\exists^{\st}y\in [0,1])(q\approx y)$, which immediately yields $(\forall x^{1}\in [0,1])(\exists^{\st}y^{1}\in [0,1])(x\approx y)$, as we have $x\approx [x](N)$ for any $x\in [0,1]$ and nonstandard $N^{0}$.  However, every real has a binary expansion in $\RCA_{0}$ (See \cite{polahirst}), and $\B_{0}^{-}$ similarly proves that every (standard) real has a (standard) binary expansion.  A real with non-unique binary expansion can be be summed with an infinitesimal to yield a real with a unique binary expansion.  
Hence, the previous yields that $(\forall \alpha^{1}\leq_{1}1)(\exists^{\st}\beta^{1}\leq_{1}1)(\alpha\approx_{1}\beta)$, which is just $\STP$.  The law of excluded middle as in $(\exists^{2})\vee \neg(\exists^{2})$ now establishes the reverse implication over $\B_{0}+\QFAC^{2,1}$.  

\smallskip

For the first forward direction, $\STP$ implies $(\forall x^{1}\in [0,1])(\exists^{\st}y^{1}\in [0,1])(x\approx y)$ as in the previous paragraph, and we thus have: 
\be\label{leiferikson}\textstyle
{(\forall^{\st}\Psi^{2})(\forall x^{1}\in [0,1])(\exists^{\st} y^{1}\in [0,1])(|x-y|<\frac{1}{\Psi(y)+1})}, 
\ee
Applying \emph{Idealisation} to \eqref{leiferikson}, we obtain
\be\label{leiferikson2}\textstyle
{(\forall^{\st}\Psi^{2})(\exists^{\st}w^{1})(\forall x^{1}\in [0,1])(\exists y\in w)(|x-y|<\frac{1}{\Psi(y)+1})}.
\ee
Dropping the second `st' in \eqref{leiferikson2} and applying $\PFTPA$, we obtain $\HBU$.

\smallskip

For the equivalence $\HBU^{\st}\asa \STP$, the reverse implication follows from the fact that $\STP$ implies \eqref{leiferikson2}.   
For the forward implication, note that $\HBU^{\st}$ implies \eqref{leiferikson2} by taking $w$ provided by $\HBU^{\st}$ and extending this sequence with all $w(i)\pm \Psi(w(i))$ for $i<|w|$. 
However, \eqref{leiferikson2} implies $\STP$ by the previous.  

\smallskip

Finally, the non-implication follows from \cite{dagsam}*{Theorem 4.2} as the latter expresses that the special fan functional is not computable in any type two functional.  Indeed, $\STP$ is equivalent to \eqref{frukkklk} by Theorem~\ref{lapdog}, and apply Theorem~\ref{consresultcor} to 
$\B_{0}^{-}+(\exists^3)+\HBU+\QFAC\vdash \eqref{frukkklk}$, to obtain a term $t$ of G\"odel's $T$ such that $\SCF(t)$, which is impossible, and we are done.  
\qed
\end{proof}
Finally, we consider the least-upper-bound princple from \cite{dagsamIII}*{\S4}.  To this end, a formula $\varphi(x^{1})$ is called \emph{extensional on $\R$} if we have
\be\label{rebel}
(\forall x, y\in \R)(x=_{\R}y\di \varphi(x)\asa \varphi(y)).  
\ee
Note that the same condition is used in RM for defining open sets as in \cite{simpson2}*{II.5.7}.  
\begin{princ}[$\LUB$]
For second-order $\varphi$ \(with any parameters\), if $\varphi(x^{1})$ is extensional on $\R$ and $\varphi(0)\wedge \neg\varphi(1)$, there is a least $y\in [0,1]$ such that $(\forall z \in (y, 1])\neg\varphi(y)$.  
\end{princ}
By $\LUB^{\st}$ we mean $\LUB$ with all quantifiers relative to `st', \emph{including} those pertaining to the parameters in the formula $\varphi^{\st}$, and all quantifiers in \eqref{rebel}.  
\begin{cor}
The system $\B_{0}^{-}$ proves $\LUB^{\st}\di \HBU^{\st}\di\STP$.
\end{cor}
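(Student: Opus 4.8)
The plan is to prove the two implications $\LUB^{\st}\di\HBU^{\st}$ and $\HBU^{\st}\di\STP$ separately inside $\B_{0}^{-}$ and compose them. The second is in essence already contained in the proof of Theorem~\ref{coreBT}: the derivation there of $\STP$ from $(\forall^{\st}\Psi^{2})(\forall q^{0}\in[0,1])(\exists^{\st}y\in[0,1])(|q-y|<\tfrac{1}{\Psi(y)+1})$ uses only $\HACint$, the basic axioms of Definition~\ref{defke}, and the (provable) fact that every standard real has a standard binary expansion --- in particular neither $\PFTPA$ nor $\QFAC^{2,1}$ --- so it goes through in $\B_{0}^{-}$. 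Hence the real work is the first implication, together with a $\B_{0}^{-}$-proof that $\HBU^{\st}$ entails that intermediate statement.

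For $\LUB^{\st}\di\HBU^{\st}$ I would run the standard argument that the least-upper-bound property implies Heine--Borel compactness, relativised throughout to `st'. Fix a standard $\Psi:\R\di\R^{+}$ and consider the internal formula
\[
\psi(x^{1})\ \equiv\ (\exists w^{1})(\forall z^{1}\in[0,x])(\exists y\in w)\big(y\in[0,1]\wedge z\in I_{y}^{\Psi}\big),
\]
so that $\psi^{\st}(x)$ says the standard reals of $[0,x]$ have a standard finite subcover with centres in $[0,1]$. This $\psi$ is second-order with standard parameter $\Psi$, is extensional on $\R$ since $[0,x]$ depends only on the $=_{\R}$-class of $x$, and $\psi^{\st}(0)$ holds with $w=\{0\}$. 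Assuming $\neg\psi^{\st}(1)$, $\LUB^{\st}$ yields a standard least $y_{0}\in[0,1]$ with $(\forall^{\st}z\in(y_{0},1])\neg\psi^{\st}(z)$. Since $y_{0}$ and $\Psi$ are standard, $\Psi(y_{0})$ is a standard positive real, so minimality of $y_{0}$ supplies a standard $z^{*}$ with $y_{0}-\Psi(y_{0})<z^{*}\le y_{0}$ and $\psi^{\st}(z^{*})$ (when $y_{0}=0$, use $\psi^{\st}(0)$ instead). Adjoining the single centre $y_{0}$ to a standard finite subcover witnessing $\psi^{\st}(z^{*})$ yields a standard finite subcover of $[0,\min(1,y_{0}+\Psi(y_{0})/2)]$; hence $\psi^{\st}$ holds at the standard point $\min(1,y_{0}+\Psi(y_{0})/2)$, which lies in $(y_{0},1]$ or equals $1$ --- contradicting $\neg\psi^{\st}$ either way. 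Therefore $\psi^{\st}(1)$, i.e.\ $\HBU^{\st}$. Note that extensionality of $\Psi$ is never used --- the extensionality hypothesis of $\LUB$ constrains only the argument of $\psi$ --- so the same argument goes through with $I_{y}^{\Psi}$ replaced by the ball of radius $\tfrac{1}{\Psi(y)+1}$ for an \emph{arbitrary} standard functional $\Psi^{2}$.

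For $\HBU^{\st}\di\STP$, apply the previous paragraph to the gauge $y\mapsto\tfrac{1}{2(\Psi(y)+1)}$ for a given standard $\Psi^{2}$: this produces a standard finite $w\subseteq[0,1]$ such that every standard real of $[0,1]$ lies within $\tfrac{1}{2(\Psi(y)+1)}$ of some $y\in w$. Let $q^{0}\in[0,1]$ be an arbitrary, possibly nonstandard, rational. Fix a standard $k$ with $\tfrac{1}{2^{k}}<\tfrac{1}{2}\min_{y\in w}\tfrac{1}{\Psi(y)+1}$ --- such $k$ exists since $w$, hence this minimum, is standard. The dyadic $\lfloor q2^{k}\rfloor/2^{k}$ is standard: $\lfloor q2^{k}\rfloor\le 2^{k}$ and $2^{k}$ is standard, so $\st_{0}(\lfloor q2^{k}\rfloor)$ by the basic axiom $\st_{0}(x)\wedge y\le_{0}x\to\st_{0}(y)$. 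It lies within $\tfrac{1}{2^{k}}$ of $q$, so if $y\in w$ covers it then $|q-y|<\tfrac{1}{2(\Psi(y)+1)}+\tfrac{1}{2^{k}}<\tfrac{1}{\Psi(y)+1}$. This yields $(\forall^{\st}\Psi^{2})(\forall q^{0}\in[0,1])(\exists^{\st}y\in[0,1])(|q-y|<\tfrac{1}{\Psi(y)+1})$, and $\STP$ then follows as recalled above. Composing the two implications gives the corollary.

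The step demanding the most care is not the Heine--Borel combinatorics but the `st'-bookkeeping, in particular the upgrade step above: $\LUB^{\st}$, and hence $\HBU^{\st}$, only delivers a finite subcover of the \emph{standard} reals of $[0,1]$, whereas reaching $\STP$ requires a subcover that covers every rational of $[0,1]$, nonstandard ones included. Bridging this gap works because standardness of the finite $w$ makes its centres and radii standard, and because truncating a rational of $[0,1]$ at a standard dyadic precision lands in the standard reals. These same observations explain why, unlike in the base theory of \cite{samcie}, no instance of $\exists^{2}$ is needed to accommodate the discontinuous gauges $\Psi^{2}$ occurring here.
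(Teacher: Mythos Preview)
Your overall structure matches the paper's proof exactly: the paper also defers the second implication $\HBU^{\st}\to\STP$ to the relevant part of Theorem~\ref{coreBT} (observing it needs neither $\QFAC$ nor $\PFTPA$), and for the first implication $\LUB^{\st}\to\HBU^{\st}$ it simply cites \cite{dagsamIII}*{Thm~4.2}. Your contribution is to spell out the classical least-upper-bound$\Rightarrow$Heine--Borel argument relativised to `$\st$', which is presumably what the cited result does; this is a welcome self-contained alternative.

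There is, however, a genuine wrinkle in your third paragraph. You label it ``For $\HBU^{\st}\to\STP$'' but then invoke ``the previous paragraph'' to obtain a standard finite sub-cover for an \emph{arbitrary} standard $\Psi^{2}$. That previous paragraph derived the non-extensional version from $\LUB^{\st}$, not from $\HBU^{\st}$; and $\HBU^{\st}$ as stated only applies to gauges $\Psi:\R\to\R^{+}$ satisfying \eqref{furg}. So as written, paragraph~3 proves $\LUB^{\st}\to\STP$ (via the strengthened cover lemma), not $\HBU^{\st}\to\STP$ on its own. This does not break the corollary, since you already flagged in paragraph~1 that $\HBU^{\st}\to\STP$ is contained in Theorem~\ref{coreBT} --- and indeed that theorem's proof supplies exactly the missing upgrade (``extend $w$ with all $w(i)\pm\Psi(w(i))$''): a standard finite $w$ whose intervals cover every \emph{standard} real of $[0,1]$ must cover every real of $[0,1]$, because the uncovered set is a finite union of closed intervals with standard endpoints, each of which would contain a standard real. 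Your dyadic-truncation idea works too, but only once you have the cover for all reals (or for non-extensional $\Psi^{2}$); the endpoint observation is what lets you bootstrap that from $\HBU^{\st}$ alone.
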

\begin{proof}
The second implication follows by noting that the above proof does not require $\QFAC$ or $\PFTPA$.  The first implication follows from \cite{dagsamIII}*{Thm 4.2}.
\end{proof}
The previous corollary has noteworthy foundational implications: the axiom $\STP$ (and the same for $\LMP$ from Section \ref{ferenginar}) is what is called a `purely nonstandard axiom' (See \cite{pimpson}*{Remark 3.8}).  
Intuitively speaking, such an axiom does not follow from any true second-order sentence relative to the standard world, i.e.\ purely nonstandard axioms do not follow from standard axioms. 
However, $\STP$ does follow from the third-order sentence $\HBU^{\st}$, as well as from a second-order schema with third-order parameters $\LUB^{\st}$.  
Hence, the notion of “purely nonstandard axiom” is extremely dependent on the 
exact formal framework.  
\subsection{Weak compactness and the weak fan functional}\label{ferenginar}
Clearly, $\HBU$ is a generalisation of $\WKL$ from RM.
In this section, we list results similar to Theorems \ref{massiveBT} and \ref{coreBT} for generalisations of $\WWKL$.
The \emph{weak fan functional} $\Lambda$ from \cite{dagsam} arises from the axiom $\WWKL$, as follows:
\be\tag{$\WWKL$}
(\forall T \leq_{1}1)\big[ \mu(T)>_{\R}0\di (\exists \beta\leq_{1}1)(\forall m)(\overline{\beta}m\in T) \big],
\ee
where `$\mu(T)>_{\R}0$' is $(\exists k^{0})(\forall n^{0})\big(\frac{|\{\sigma \in T: |\sigma|=n    \}|}{2^{n}}\geq\frac{1}{k}\big)$.
Although $\WWKL$ is not part of the Big Five, it sports \emph{some} equivalences (\cite{simpson2}*{X.1}).   
The following fragment of \emph{Standard Part} is the nonstandard counterpart of $\WWKL$, as studied in \cite{pimpson}:  
\be\tag{$\LMP$}
(\forall T^{1} \leq_{1}1)\big[ \mu(T)\gg0\di (\exists^{\st} \beta^{1}\leq_{1}1)(\forall^{\st} m^{0})(\overline{\beta}m\in T) \big],
\ee
where `$\mu(T)\gg 0$' is just the formula $[\mu(T)>_{\R}0]^{\st}$.  
Clearly, $\WWKL$ and $\LMP$ are weakened versions of $\WKL$ and $\STP$; the following weaker version of the special fan functional arises from $\LMP$.  
As for the special one, there is \emph{no unique} weak fan functional, i.e.\ it is in principle incorrect to refer to `the' weak fan functional.  
\bdefi[Weak fan functional] \label{fadier}
We define $\WCF(\Lambda)$ for $\Lambda^{(2\di (1\times 1))}$:
\[\textstyle
(\forall k^{0},g^{2}, T^{1}\leq_{1}1)\big[(\forall \alpha \in \Lambda(g,k)(2))  (\overline{\alpha}g(\alpha)\not\in T)
\di   (\exists n\leq\Lambda(g,k)(1) ) (L_{n}(T)\leq\frac{1}{k})\big].
\]
Any $\Lambda$ satisfying $\WCF(\Lambda)$ is referred to as a \emph{weak fan functional}.
\edefi
Now, $\WWKL$ is equivalent to the following statement: \emph{for every $X^{1}$, there is $Y^{1}$ which is Martin-L\"of random relative to $X$}, as proved in \cite{avi1337}*{Theorem 3.1}.
This equivalence is proved in $\RCA_{0}$, and the latter also suffices to e.g.\ define a \emph{universal} Martin-L\"of test $(U_{i}^{X})_{i\in \N}$ (relative to any $X^{1}$).  
The latter has type $0\di 1$ and represents a universal and effective (relative to $X$) null set, i.e.\ a rare event.  Intuitively, $Y$ is (Martin-L\"of) \emph{random} relative to $X$, if $Y$ is not in such a rare event.  
To make this more precise, define `$f^{1}\in [\sigma^{0}]$' as $\overline{f}|\sigma|=_{0}\sigma$ for any finite binary sequence and define $\MLR(Y, X)$ as $(\exists i^{0})(\forall w^{0}\in U_{i}^{X})(Y\not \in [w])$.   

\smallskip

We can now define restrictions of $\STP$ and $\HBU$ to Martin-L\"of random reals.  
%
\be\tag{$\MLR_{\ns}$}
(\forall^{\st} X^{1})(\forall Y^{1})(\exists^{\st} Z^{1})\big( [\MLR(Y,X)]^{\st}\di Z\approx_{1} Y  ).
\ee
Let $\MLR(X, Y, i)$ be $\MLR(X, Y)$ without the leading quantifier.  Now consider 
\be\tag{$\HBU_{\ml}$}
(\forall \Psi^{2}, k^{0}, X^{1})(\exists w^{1})(\forall Y)(\exists Z\in w)( \MLR(Y, X, k)\di Y \in [\overline{Z}\Psi(Z)] ).
\ee
Note that $\HBU_{\ml}$ expresses that the canonical cover $\cup_{f\in 2^{\N}}[\overline{f}\Psi(f)]$ has a finite sub-cover which
covers all reals which are random \emph{and already outside the universal test at level} $U_{k}^{X}$ of the universal test.  
Since $\mu(U_{k})\leq \frac{1}{2^{k}}$, the finite sub-cover need not cover a measure one set in Cantor space.  
The following theorem is proved in the same way as Theorems \ref{massiveBT} and \ref{coreBT}.  
\begin{thm}\label{probab}
The system $\B_{0}+\QFAC^{2,1}$ proves $\LMP\asa \MLR_{\ns}\asa  \HBU_{\ml}$.  Additionally assuming $(\FF)$, we also obtain an equivalence to $ (\exists \Lambda)\WCF(\Lambda)$.  
\end{thm}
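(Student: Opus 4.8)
The plan is to transcribe, almost line by line, the proofs of Theorems~\ref{massiveBT} and~\ref{coreBT}, replacing $\STP$ by $\LMP$, the canonical cover by its measure-theoretic refinement, and $\SCF$ by $\WCF$; throughout, the elementary combinatorics of binary trees is superseded by bookkeeping with the universal Martin-L\"of test $(U_i^X)_{i\in\N}$ relative to a parameter $X^1$, whose basic properties, in particular $\mu(U_k^X)\leq 2^{-k}$, are provable in $\RCA_0$ and hence available internally. The first step is to record the $\LMP$-analogue of Theorem~\ref{lapdog} and Corollary~\ref{corcorcor}: over $\B_0^-$, $\LMP$ is equivalent to the $(g^2,k^0)$-version
\[\textstyle
(\forall^{\st}g^2,k^0)(\exists^{\st}w^1\leq_1 1,m^0)(\forall T^1\leq_1 1)\big[(\forall\alpha\in w)(\overline\alpha g(\alpha)\notin T)\di(\exists n\leq m)(L_n(T)\leq\frac{1}{k})\big],
\]
and to the corresponding `random path' reformulation, the equivalences following by contraposition plus \emph{Idealisation}~\textsf{I} exactly as in Theorem~\ref{lapdog}; in particular $\B_0^-\vdash(\exists^{\st}\Lambda)\WCF(\Lambda)\di\LMP$, and the type of the leading universal quantifier may be lowered as in Corollary~\ref{corcorcor}.

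Next I would establish $\LMP\asa\MLR_{\ns}$ in $\B_0^-$; this plays the role that Corollary~\ref{corcorcor} (the passage to an $\approx$-reformulation) plays for $\STP$. Unwinding $\MLR$: for standard $X^1$ and standard $i^0$ the complement of $\bigcup_{w\in U_i^X}[w]$ is the set of branches of a \emph{standard} binary tree $S_i^X$ with $\mu(S_i^X)\geq 1-2^{-i}$, and since $S_i^X$ is standard this lower bound upgrades to $\mu(S_i^X)\gg0$, witnessed by the standard number $2^{i+1}$. For $\LMP\di\MLR_{\ns}$ one feeds such trees to $\LMP$ and promotes the standard branch it returns to a standard $Z$ with $Z\approx_1 Y$, using the binary-expansion argument (adding an infinitesimal in the non-unique case) from the end of the proof of Theorem~\ref{coreBT}; for $\MLR_{\ns}\di\LMP$ one checks, relativising the argument of \cite{avi1337} that positive measure produces randoms, that any $T\leq_1 1$ with $\mu(T)\gg0$ carries a branch that is Martin-L\"of random relative to $T$ in the $\st$-sense, whereupon $\MLR_{\ns}$ with $X:=T$ applies. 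I expect the main obstacle to lie here: keeping the standardness annotations straight while reconciling the \emph{external} randomness hypothesis $[\MLR(Y,X)]^{\st}$ of $\MLR_{\ns}$ with the \emph{internal} level predicate $\MLR(Y,X,k)$ that occurs in $\HBU_{\ml}$. This requires careful use of the facts that $(U_i^X)_{i}$ is standard whenever $X$ is, that $\mu(U_k^X)\leq 2^{-k}$, and that a standard finite object captured by the test is already captured by its standard part.

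The equivalence $\MLR_{\ns}\asa\HBU_{\ml}$ then transcribes $\STP\asa\HBU$ from Theorem~\ref{coreBT}. For the forward direction, from $\MLR_{\ns}$ one obtains, for standard $\Psi^2,k^0,X^1$, that $(\forall Y)(\exists^{\st}Z)(\MLR(Y,X,k)\di Y\in[\overline Z\Psi(Z)])$ (using that $Z\approx_1 Y$ forces $\overline Z\Psi(Z)=\overline Y\Psi(Z)$ for standard $\Psi,Z$), and \emph{Idealisation} pulls the standard existential out as a finite sequence $w$, yielding $\HBU_{\ml}^{\st}$; dropping the inner `$\st$' on $w$ and applying $\PFTPA$ to the coded triple $(\Psi,k,X)$ gives $\HBU_{\ml}$. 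For the reverse direction I would run the excluded-middle trick on $(\exists^2)\vee\neg(\exists^2)$: under $(\exists^2)$ the matrix of $\HBU_{\ml}$ is equivalent to a quantifier-free formula, so $\QFAC$ (in the form $\QFAC^{2,1}$, after coding the triple) yields a choice functional, $\PFTPA$ makes it standard, standard inputs get standard outputs, and $\MLR_{\ns}$ then follows exactly as $\STP$ follows from \eqref{leiferik3} in the proof of Theorem~\ref{coreBT}, namely: if some $Y$ admitted no standard $Z\approx_1 Y$, then $\HACint$ would yield a standard gauge separating $Y$ from every standard $Z$, a contradiction; under $\neg(\exists^2)$ all type-two functionals are continuous and the case closes exactly as the corresponding $\neg(\exists^2)$-branch in the proof of Theorem~\ref{coreBT}.

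Finally, for the extra equivalence with $(\exists\Lambda)\WCF(\Lambda)$ under the hypothesis $(\FF)$, I follow the proof of Theorem~\ref{massiveBT}. The implication $(\exists\Lambda)\WCF(\Lambda)\di\LMP$ is immediate from $\PFTPA$ and the $\LMP$-analogue of Theorem~\ref{lapdog} recorded above. For the converse, excluded middle on $(\exists^2)$ once more: under $(\exists^2)$ one reaches from $\LMP$ the standard $(g^2,k^0)$-version displayed above, drops the inner `$\st$', applies $\PFTPA$ (legitimate because $(\exists^2)$ reduces its matrix to a quantifier-free formula), and then applies $\QFAC^{2,1}$ (coding the pair $(g,k)$ as a single type-two input) to obtain a functional which, after the type-lowering corollary, satisfies $\WCF$; under $\neg(\exists^2)$ all $g^2$ are continuous, the fan functional $\Phi$ from $(\FF)$ applies to every such $g$, and setting $\Lambda(g,k)(2)$ to the $2^{\Phi(g)}$ sequences $\sigma*00\dots$ with $|\sigma|=\Phi(g)$ and $\Lambda(g,k)(1):=\Phi(g)$ witnesses $\WCF(\Lambda)$ exactly as in Theorem~\ref{massiveBT}. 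Apart from the reconciliation issue flagged above, every step is routine given the two earlier proofs.
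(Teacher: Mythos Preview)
Your proposal is correct and takes essentially the same approach as the paper: the paper's proof is the single sentence ``proved in the same way as Theorems~\ref{massiveBT} and~\ref{coreBT}'', and your plan is precisely a careful transcription of those two arguments with $\STP$, $\SCF$, and $\HBU$ replaced by $\LMP$, $\WCF$, and $\HBU_{\ml}$, together with the Martin-L\"of bookkeeping needed to mediate between $\LMP$ and $\MLR_{\ns}$. Your write-up is in fact considerably more detailed than what the paper supplies, and the obstacle you flag (aligning the external hypothesis $[\MLR(Y,X)]^{\st}$ with the internal level predicate $\MLR(Y,X,k)$) is exactly the point where the transcription is not purely mechanical.
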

Clearly, we may weaken $(\FF)$ in the theorem to a functional only implying $\WWKL$.  

\medskip

Finally, the `st' in the antecedent of $\LMP$ (and $\MLR_{\ns}$) is essential: 
in particular, we show that $\STP$ (and hence $\Theta$) is \emph{robust} in the sense of RM (\cite{montahue}*{p.\ 495}), but $\LMP$ is not. 
Consider the following variations of $\LMP$ and $\STP$. 
\be\tag{$\LMP^{+}$}
(\forall T \leq_{1}1)\big[ \mu(T)>_{\R}0\di (\exists^{\st} \beta\leq_{1}1)(\forall^{\st} m)(\overline{\beta}m\in T) \big],
\ee
\[
(\forall T \leq_{1}1)\big[(\forall n^{0})(\exists \beta^{0})(\beta\in T\wedge |\beta|=n)\di (\exists^{\st} \beta\leq_{1}1)(\forall^{\st} m)(\overline{\beta}m\in T) \big],
\]
where the second one is called `$\STP^{-}$'. 
We have the following theorem.  
\begin{thm}
In $\B_{0}^{-}+\WWKL$, we have $\STP\asa \LMP^{+}\asa \STP^{-}$.
\end{thm}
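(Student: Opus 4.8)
The plan is to establish the cycle $\STP\di\STP^{-}\di\LMP^{+}\di\STP$; as it happens, each of these three implications goes through in $\B_{0}^{-}$ alone, so that $\WWKL$ is in fact superfluous here (it is included only because it follows over $\B_{0}^{-}$ from each of the three principles, via $\STP\di\WKL\di\WWKL$, and to keep the section uniform).

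Two of the implications are routine. For $\STP\di\STP^{-}$: by Theorem~\ref{lapdog}, $\STP$ is equivalent over $\B_{0}^{-}$ to \eqref{fanns}, and since the \emph{internal} antecedent $(\forall n^{0})(\exists\beta^{0})(\beta\in T\wedge|\beta|=n)$ of $\STP^{-}$ trivially implies the antecedent $(\forall^{\st}n)(\exists\beta)(|\beta|=n\wedge\beta\in T)$ of \eqref{fanns}, while the two statements have identical consequents, $\STP^{-}$ follows at once. For $\STP^{-}\di\LMP^{+}$: assume $\mu(T)>_{\R}0$, i.e.\ $(\exists k^{0})(\forall n^{0})\big(\frac{|\{\sigma\in T:\,|\sigma|=n\}|}{2^{n}}\geq\frac{1}{k}\big)$; then at each length $n$ the displayed fraction is a strictly positive rational, so the natural number $|\{\sigma\in T:\,|\sigma|=n\}|$ is at least $1$, whence $T$ has a node of every length, and $\STP^{-}$ delivers the required standard path.

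The implication $\LMP^{+}\di\STP$ carries the real content. By Corollary~\ref{corcorcor} it suffices to prove $(\forall\alpha^{0}\leq_{0}1)(\exists^{\st}\beta^{1}\leq_{1}1)(\hat{\alpha}\approx_{1}\beta)$. So fix a finite binary string $\alpha^{0}\leq_{0}1$ of length $N$, fix some nonstandard $M$ (which exists by \textsf{I}), put $N':=N+M$, and let $\alpha^{*}$ be the binary string of length $N'$ agreeing with $\alpha$ on $[0,N)$ and equal to $0$ on $[N,N')$ (i.e.\ $\alpha^{*}=\overline{\hat{\alpha}}N'$). Define a binary tree $S\leq_{1}1$ by putting $\sigma\in S$ iff either $|\sigma|\leq N'$ and $\sigma=\overline{\alpha^{*}}|\sigma|$, or $|\sigma|>N'$ and $\overline{\sigma}N'=\alpha^{*}$; this $S$ is given by a term in the parameters $\alpha^{*},N'$ and is immediately checked to be closed under initial segments. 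At length $n$ the set $S$ has exactly one node when $n\leq N'$ and exactly $2^{n-N'}$ nodes when $n>N'$, so $\frac{|\{\sigma\in S:\,|\sigma|=n\}|}{2^{n}}\geq\frac{1}{2^{N'}}$ for every $n$; hence $\mu(S)>_{\R}0$, witnessed by $k:=2^{N'}$ (which exists by the exponentiation axiom of $\eefa^{\omega}$). Now $\LMP^{+}$ applied to $S$ yields a standard $\beta\leq_{1}1$ with $\overline{\beta}m\in S$ for all standard $m$; since $m<N'$ for every standard $m$, this forces $\overline{\beta}m=\overline{\alpha^{*}}m$, and therefore $\beta(i)=\alpha^{*}(i)=\hat{\alpha}(i)$ for all standard $i$, i.e.\ $\hat{\alpha}\approx_{1}\beta$, as required. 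The one genuinely delicate point — and exactly the reason $\LMP^{+}$, unlike $\LMP$, climbs all the way up to $\STP$ — is that this `prefixes-then-extensions' tree satisfies the \emph{internal} predicate $\mu(S)>_{\R}0$ even though its measure $2^{-N'}$ is infinitesimal: the existential witness $k$ is allowed to be nonstandard, which is precisely what the internal (rather than the starred) antecedent of $\LMP^{+}$ provides. Everything else is bookkeeping.
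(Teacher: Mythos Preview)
Your proof is correct, and the implication $\LMP^{+}\di\STP$ is essentially the paper's argument (the paper works directly with $f^{1}\leq_{1}1$ and a nonstandard height $N$ rather than going through Corollary~\ref{corcorcor}, but the `single-path-then-full-tree' construction is identical). The overall route differs, however: the paper establishes $\STP\di\LMP^{+}$ \emph{using} $\WWKL$ (apply $\WWKL$ to $T$ with $\mu(T)>_{\R}0$ to obtain an internal path, then apply $\STP$ to that path), and separately proves $\STP^{-}\di\STP$ via overspill; you instead run the cycle $\STP\di\STP^{-}\di\LMP^{+}\di\STP$, the middle step being the immediate observation that $\mu(T)>_{\R}0$ forces $T$ to have a node of every length. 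Your detour therefore removes the appeal to $\WWKL$ from the proof of the equivalences themselves, which is a small but genuine gain over the paper's argument.

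One caveat on your parenthetical: the claim that $\WWKL$ follows over $\B_{0}^{-}$ from each of the three principles via $\STP\di\WKL\di\WWKL$ is not clearly justified without $\PFTPA$. The paper's route $\STP\di\WKL^{\st}\di\WKL$ in the proof of Theorem~\ref{coreBT} uses Transfer for the second step, and $\STP$ alone only yields, for an infinite binary tree $T$, a standard $\beta$ with $\overline{\beta}n\in T$ for all \emph{standard} $n$, not an internal path. This does not affect your main argument, but the side remark should be dropped or restricted to $\B_{0}$.
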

\begin{proof}
For the first equivalence, we only need to prove $\STP\leftarrow \STP^{-}$, which follows by taking a tree $T\leq_{1}1$ as in the antecedent $\STP$, noting that by overspill it has 
a sequence of nonstandard length, and extending this sequence with $00\dots$ to obtain a tree as in the antecedent of $\STP^{-}$.  Then $\STP^{-}$ yields a standard path in the standard part of the modified tree, which is thus
also in the standard part of the original tree.  
For $\STP\di \LMP^{+}$, apply $\STP$ to the path claimed to exist by $\WWKL$ and note that we obtain $\LMP^{+}$.  
For $\LMP^{+}\di\STP$, fix $f^{1}\leq_{1}1$ and nonstandard $N$.  Define the tree $T\leq_{1}1$ which is $f$ until height $N$, followed by the full binary tree.  
Then $\mu(T)>_{\R}0$ and let standard $g^{1}\leq_{1}1$ be such that $(\forall^{\st}n)(\overline{g}n\in T)$.   By definition, $f\approx_{1}g$ follows, and we are done.  
\qed
\end{proof}

\section*{References}
\begin{biblist}
\bibselect{allkeida}
\end{biblist}
\bye

\bye 

\bye